\title[Induced character in equivariant K-theory and wreath products]
{Induced character in equivariant K-theory, wreath products and pullback of groups}
\author{German Combariza}
\address{Departamento de Matem\'aticas. \\Pontificia Universidad Javeriana\\Cra. 7 No. 43-82 - Edificio Carlos Ort\'iz 5to piso\\ Bogot\'a D.C, Colombia}
\email{germancombariza@javeriana.edu.co}
\urladdr{https://sites.google.com/site/combariza/research}
\author{Juan Rodriguez}
\address{UMPA L'unit\'e de Math\'ematiques Pures et Appliqu\'ees \\ENS de Lyon site Monod UMPA UMR 5669 CNRS 46, all\'e d'Italie\\69364 Lyon Cedex 07, France }
\email{juan-esteban.rodriguez-camargo@ens-lyon.fr}
\author{Mario Vel\'asquez}
\address{Departamento de Matem\'aticas. \\Pontificia Universidad Javeriana\\Cra. 7 No. 43-82 - Edificio Carlos Ort\'iz 5to piso\\ Bogot\'a D.C, Colombia}
\email{mario.velasquez@javeriana.edu.co}
 \urladdr{https://sites.google.com/site/mavelasquezm/}
         \date{\today}
\DeclareMathAlphabet\EuR{U}{eur}{m}{n}
\SetMathAlphabet\EuR{bold}{U}{eur}{b}{n}
\theoremstyle{plain}
\newtheorem{theorem}{Theorem}[section]
\newtheorem{lemma}[theorem]{Lemma}
\newtheorem{proposition}[theorem]{Proposition}
\newtheorem{corollary}[theorem]{Corollary}
\theoremstyle{definition}
\newtheorem{definition}[theorem]{Definition}
\newtheorem{example}[theorem]{Example}
\newtheorem{remark}[theorem]{Remark}
\global\let\c@equation=\c@theorem}
\def\spinc{\text{Spin}^c}
\newcommand{\xycomsquare}[8]                   % kommutatives Quadrat (xy-Version)
{\xymatrix
	{#1 \ar[r]^{#2} \ar[d]^{#4} &
		#3 \ar[d]^{#5}  \\
		#6\ar[r]^{#7} &
		#8
	}
}
\newcommand{\calf}{\mathcal{F}}
\def\c{\mathfrak{C}}
\def\s{\mathcal{S}}
\newcommand{\IC}{{\mathbb C}}
\newcommand{\IZ}{{\mathbb Z}}
\newcommand{\frakS}{{\mathfrak S}}
\newcommand{\charac}{\operatorname{char}}
\newcommand{\Class}{\operatorname{Class}}
\newcommand{\Hom}{\operatorname{Hom}}
\newcommand{\im}{\operatorname{im}}
\newcommand{\Ind}{\operatorname{Ind}}
\newcommand{\res}{\operatorname{res}}
\newcommand{\pt}{\{\bullet\}}
\newcommand{\higherlim}[3]{{\setbox1=\hbox{\rm lim}
		\setbox2=\hbox to \wd1{\leftarrowfill} \ht2=0pt \dp2=-1pt
		\mathop{\vtop{\baselineskip=5pt\box1\box2}}
		_{#1}}^{#2}#3}
\newcommand{\version}[1]                       %marks the date of last editing and compilation
{\begin{center} last edited on #1\\
		last compiled on \today \\
		name of texfile: \jobname
	\end{center}
}
\newcounter{commentcounter}
\begin{document}
	
	\begin{abstract}
		Let $G$ be a finite group, $X$ be a compact $G$-space. In this note we study the $(\IZ_+\times\IZ/2\IZ)$-graded algebra
		$$\calf^q_G(X)=\bigoplus_{n\geq0}q^n\cdot K_{G\wr\mathfrak{S}_n}(X^n)\otimes\IC,$$defined in terms of equivariant K-theory with respect to wreath products as a symmetric algebra. More specifically, let $H$ be another finite group and $Y$ be a compact $H$-space, we give a decomposition of $\calf^q_{G\times H}(X\times Y)$ in terms of $\calf^q_G(X)$ and $\calf^q_H(Y)$. For this, we need to study the representation theory of pullbacks of groups. We discuss also some applications of the above result to equivariant connective K-homology.

	\end{abstract}	
	\maketitle
\section*{Notation}
In this note we denote by $\frakS_{n}$ the symmetric group in $n$ letters. Let $G$ be a finite group, let $g,g'\in G$, we say that $g$ and $g'$ are conjugated in $G$ (denoted by $g\sim_Gg'$) if there is $s\in G$ such that $g=sg's^{-1}$. We denote by $$[g]_G=\{g'\in G\mid g\sim_Gg'\}$$to the conjugacy class of $g$ in $G$ (or simply by $[g]$ when $G$ is clear from the context). We denote by $G_*$ the set of conjugacy classes of $G$. We denote  by $C_G(g)$ the centralizer of $g$ in $G$. Also  $R(G)$ will be the complex representation ring of $G$, with operations given by direct sum and tensor product, and generated as abelian group by the isomorphism classes of irreducible representations of $G$. The \emph{class function ring of $G$} is the set  $$\Class(G)=\{f:G\to\IC\mid f \text{ is constant in conjugacy classes}\}$$ with the usual operations.

\section{Introduction}
%Topics to be included in the introduction:
%\begin{itemize}
%	\item Motivation to study the Fock ring.
%	\item Kunneth formula for equivariant K-theory.
%	\item Appearance of the pullback of groups in some different contexts.
%	\item As generators of the Fock ring has relation with the Chern classes.
%\end{itemize}

Let $X$ be a finite CW-complex. In \cite{segal1996} Segal studied the vector spaces$$\calf(X)=\bigoplus_{n\geq0}K_{\frakS_{n}}(X^n)\otimes\IC,$$
these spaces carries several interesting structures, for example they admit a Hopf algebra structure with the product defined using induction on vector bundles and the coproduct defined using restriction.

Later in \cite{wang2000}, Wang generalizes Segal's work to an equivariant context. Let $G$ be a finite group and $X$ be a finite  $G$-CW-complex, Wang defines the vector space
$$\calf_G(X)=\bigoplus_{n\geq0}K_{G\wr\frakS_{n}}(X^n)\otimes\IC,$$
where $G\wr\frakS_{n}$ denotes the wreath product acting naturally over $X^n$. Wang proves that $\calf_G(X)$ admits similar structures as $\calf(X)$. In particular $\calf_G(X)$ has a description as a supersymmetric algebra in terms of $K_G(X)\otimes\IC$. In this paper we present a proof of the above fact using a explicit description of the character of an induced vector bundle. This description indicates that $\calf_G(X)$ has the size of the Fock space of a Heisenberg superalgebra.

Following ideas of \cite{segal1996}, in \cite{ve2015} appears another reason to study $\calf_G(X)$. When $X$ is a $G$-spin$^c$-manifold of even dimension,  $\calf_G(X)$ is isomorphic to the homology with complex coefficients of the $G$-fixed point set of a based configuration space $\c(X,x_0,G)$ whose $G$-equivariant homotopy groups corresponds to the reduced $G$-equivariant connective K-homology groups of $X$. This description allows to relate generators of $\calf_G(X)$ with some homological versions of the Chern classes.

Let $G$ and $H$ be finite groups, $X$ be a finite $G$-CW-complex and $Y$ be a finite $H$-CW-complex, we also prove a K\"unneth formula for $\calf_{G\times H}(X\times Y)$, obtaining an isomorphism
$$\calf_{G\times H}(X\times Y)\cong\calf_G(X)\otimes_{\mathcal{F}(\pt)}\calf_H(Y)$$that is compatible with the decomposition as a supersymmetric algebra. In order to do this, we need to study the representation theory of pullbacks of groups.

Let \begin{equation*}%\label{pullbackrep}
\xymatrix{
	\Gamma \ar[r]^{p_2}\ar[d]_{p_1} & G\ar[d]_{\pi_2}\\
	H \ar[r]^{\pi_1} &  K}\end{equation*}
be a pullback diagram of finite groups, with $\pi_1$ and $\pi_2$ surjective, in this case $\Gamma$ can be realized as a subgroup of $G\times H$. We prove that when $\Gamma$ is conjugacy-closed (see Definition \ref{conjugacy-closed}) in $G\times H$ then we  have a ring isomorphism
$$\Class(\Gamma)\cong \Class(H)\otimes_{\Class(K)}\Class(G).$$

Moreover we use that some conjugacy classes in $(G\times H)\wr\frakS_{n}$ are closed in $(G\wr\frakS_n)\times (H\wr\frakS_{n})$ to prove the K\"unneth formula for the algebra $\calf_{G\times H}(X\times Y)$.

This paper is organized as follows:

In Section 2 we recall basic facts about equivariant K-theory, in particular we recall the construction for the character. Following ideas of \cite{serre} we give an explicit definition of the induced bundle and recall a formula (proved in \cite[Thm. D]{kuhninduction}) for a character of the induced bundle. In Section 3 we recall basic facts about wreath products and its action over $X^n$. In Section 4 we recall the definition of $\calf_G(X)$ and give another way to obtain the description as a supersymmetric algebra using the formula of the induced character. In Section 5 we study the representation theory of pullbacks. In Section 6 we recall some basic properties of semidrect products of direct products. In Section 7 we use results in Section 5 to give a K\"unneth formula for the Hopf algebra $\calf_{G\times H}(X\times Y)$. In Section 8 we do some final remarks about the relation of $\calf(X)$ and homological versions of Chern classes.

\section{Induced character in equivariant K-theory}
In this section we recall a decomposition theorem for equivariant K-theory with complex coefficients obtained by Atiyah and Segal in \cite{a-s1989}. In the next section we use that result to give a simple description of $\calf^q_G(X)$. In this paper all CW-complexes (and $G$-CW-complexes) that we consider are finite.
\begin{definition}
	Let $X$ be a $G$-space. A $G$-vector bundle over $X$ is a map $p:E\to X$, where $E$ is a $G$-space satisfying the following conditions
	\begin{enumerate}
		\item $p:E\to X$ is a vector bundle.
		\item $p$ is a $G$-map
		\item For every $g\in G$ the left translation $E\to E$ by $g$ is bundle map.
	\end{enumerate}
\end{definition}
If $p:E\to X$ is a $G$-vector bundle we define \emph{the fiber over $x\in X$} to the set$$p^{-1}(x)=\{v\in E\mid p(v)=x\},$$when $p$ is clear from the context we also denote this set by $E_x$. Also if $H\subseteq G$ is a subgroup, we can consider $E$ as a $H$-vector bundle over $X$, we denote it by $\res_H^G(E)$.

\begin{definition}Let $X$ and $Y$ be $G$-spaces. If $p:E\to Y$ is a $G$-vector bundle and  $f:X\to Y$ is a $G$-map, then the pullback $p^*E \to X$ is a $G$-vector bundle over $X$ defined as $$p^*E=\{(e,x)\in E\times X\mid p(e)=f(x)\}.$$ When $i:X\to Y$ is an inclusion we usually denote $i^*(E)$ by $E\mid Y$.
\end{definition}

Details about $G$-vector bundles can be found in \cite{at89}. 

\begin{definition}
	Let $G$ be a group, let $X$ be a finite $G$-CW-complex (see \cite{t87}), the \emph{equivariant K-theory group of $X$},denoted by $K_G(X)$ is defined as the Grothendieck group of the monoid of isomorphism classes of $G$-equivariant vector bundles over $X$ with the operation of direct sum.
	The functor $K_G(-)$ could be extended to an equivariant cohomology theory $K^*(-)$, defining for $n>0$: $$K_G^{-n}(X)=\ker\left(K_G(X\times S^n)\xrightarrow{i^*}K_G(X)\right).$$And  for any $G$-CW-pair $(X,A)$, set$$K_G^{-n}(X,A)=\ker\left(K_G^{-n}(X\cup_AX )\xrightarrow{i_2^*}K_G^{-n}(X)\right).$$Finally for $n<0$
	$$K_G^{-n}(X)=K^n_G(X)\text{ and }K_G^{-n}(X,A)=K^n_G(X,A).$$
	For more details about equivariant K-theory the reader can consult \cite{segal1968}.
\end{definition}
%\begin{example} Let $\mathfrak{S}_n$ be the symmetric group on $n$ letters acting in the usual way over $X^n$. We can consider the equivariant K-theory group $K_{\mathfrak{S}_n}(X^n)$.
%\end{example}
\begin{example}
 If the action of $G$ over $X$ is free, then there is a canonical isomorphism of abelian groups $$K_G(X) \cong K(X/G).$$
 \end{example}
\begin{example}
 If the action of $G$ over $X$ is trivial, then there is a canonical isomorphism of abelian groups  $$K_G(X)\cong R(G)\otimes_\IZ K(X),$$when $R(G)$ denotes the (complex) representation ring of $G$. In particular when $X=\pt$ we obtain $$K_G(\pt)\cong R(G).$$
\end{example}

%The reader can consult the basic properties of equivariant K-theory for example in \cite{segal1968}. 
%The group $K_G(X)$ can be extended to a $\IZ/2\IZ$-graded $G$-equivariant cohomology theory denoted by $K^*_G(X)$. Details can be found in \cite{segal1968}.

If $Y$ is a finite $G$-CW-complex, we can define a $G$-action on $K(Y)$. Let $g\in G$, the pullback $$g^*:K(Y)\to K(Y),$$defines a $G$-action over $K(Y)$. We will need the following lemma.
\begin{lemma}
	Let $Y$ be a finite $G$-CW-complex, then $$K(Y/G)\otimes\IC\cong K(Y)^G\otimes\IC$$
\end{lemma}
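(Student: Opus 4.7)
The plan is to reduce the statement to the analogous well-known fact in rational cohomology via the Chern character. The quotient map $\pi\colon Y\to Y/G$ induces a ring homomorphism $\pi^*\colon K(Y/G)\to K(Y)$; since each $g\in G$ descends to the identity on $Y/G$, we have $g^*\circ \pi^*=\pi^*$, so the image of $\pi^*$ lies in $K(Y)^G$. My goal is then to prove that the induced map $K(Y/G)\otimes\IC\to K(Y)^G\otimes\IC$ is an isomorphism.

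First, I would invoke the Chern character $\mathrm{ch}\colon K(-)\otimes\IQ\to H^{\mathrm{even}}(-;\IQ)$, which is a natural ring isomorphism for finite CW-complexes. Naturality implies that $\mathrm{ch}$ is $G$-equivariant, where $G$ acts on both sides by pullback along $g\colon Y\to Y$. Because $G$ is finite and we are working in characteristic zero, taking $G$-invariants is exact and commutes with flat base change, so $(K(Y)\otimes\IQ)^G=K(Y)^G\otimes\IQ$ and the analogous identity holds for cohomology.

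Next I would apply the classical transfer argument: for a finite group $G$ acting on a finite CW-complex $Y$ with orbit space $Y/G$, there is a rational transfer $\tau\colon H^*(Y;\IQ)\to H^*(Y/G;\IQ)$ satisfying $\tau\circ\pi^*=|G|\cdot\mathrm{id}$ and $\pi^*\circ\tau=\sum_{g\in G} g^*$. This forces $\pi^*\colon H^*(Y/G;\IQ)\to H^*(Y;\IQ)^G$ to be an isomorphism. Chaining this cohomological isomorphism with the Chern character yields $K(Y/G)\otimes\IQ\cong K(Y)^G\otimes\IQ$, and a final extension of scalars from $\IQ$ to $\IC$ gives the lemma.

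The main obstacle I foresee is not any single step but the bookkeeping needed to confirm that the $G$-action on $K(Y)$ defined in the paper via pullbacks corresponds, under $\mathrm{ch}$, to the expected pullback action on $H^{\mathrm{even}}(Y;\IQ)$, and that taking $G$-invariants commutes cleanly with the flat base changes $\IZ\to\IQ\to\IC$. Both points are standard but must be verified carefully in order to justify the chain of isomorphisms; once they are in hand, the rest of the argument is either classical (Chern character isomorphism, transfer in cohomology) or purely formal (exactness of invariants under a finite group in characteristic zero).
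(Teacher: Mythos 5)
Your argument is correct and is essentially the same as the paper's, which likewise deduces the lemma from the Chern character together with the analogous statement $H^*(Y/G;\IQ)\cong H^*(Y;\IQ)^G$ for singular cohomology; you have simply filled in the details (naturality of $\mathrm{ch}$, the transfer, and compatibility of invariants with flat base change) that the paper leaves implicit.
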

\begin{proof}It is a consequence of the Chern character and the analogous fact for singular cohomology.
%	We denote $\pi:Y\to Y/G$ to the quotient map. We have a group homomorphism
%	\begin{align*}
%	\pi^*:K(Y/G)&\to K(Y)\\
%	[E]&\mapsto [\pi^*(E)]
%	\end{align*}
%	Note that $\pi^*(E)$ has a structure of $G$-vector bundle because for every $g\in G$ and $y\in Y$, $$\pi^*(E)_y=E_{G\cdot y}=\pi^*(E)_{gy},$$ then $g$ sends the fiber over $y$ onto the fiber over $gy$ by the identity map. Moreover $\pi^*(E)$ is isomorphic as vector bundle with $g^*(\pi^*(E))$ (the inverse is given by $(g^{-1})^*$), hence $[\pi^*(E)]\in K(Y)^G$. On the other hand, let $[F]\in K(Y)^G$, define an equivalence relation on $F$ by
%	$$v\sim v'\text{ if there is }g\in G \text{ such that } g^*(v)=v'.$$We have that $F/\sim$ is a vector bundle over $Y/G$. It defines a map\begin{align*}
%	q:K(Y)^G&\to K(Y/G)\\ [F]&\mapsto[F/\sim].
%	\end{align*}Note that $q$ is the inverse for $\pi^*$. Then $\pi^*$ is the desired isomorphism.
%	
\end{proof}%The main tool for calculations of equivariant K-theory groups is given by the equivariant  Atiyah-Hirzebruch spectral sequence in particular when we take rational coefficients the spectral sequence collapses at page 2 and the K-theory groups can be expressed in terms of certain Bredon cohomology groups. 
%\textbf{Febreo 03 2017}\\
In \cite{a-s1989}  a character for equivariant K-theory is constructed, that generalizes the character of representations. We will recall this construction briefly. Let $E$ be a $G$-vector bundle over $X$ and $g\in G$. Note that $X^g$ is a $C_G(g)$-space, then if $E$ is a $G$-vector bundle, $E|X^g$ is canonically a $C_G(g)$-vector bundle over $X^g$. Considering the action given by pullback we have that the isomorphism class $[(E|X^g)]\in K(X^g)$ is a $C_G(g)$-fixed point. Then $[(E|X^g)]\in K(X^g)^{C_G(g)}$. Finally for every element $\lambda\in S^1$, we can form the vector bundle of $\lambda$-eigenvectors considering the action of the element $g$ over $\pi(E|X^g)$ denoted by $\pi(E|X^g)_\lambda$. Then we can define a map

\begin{eqnarray*}
	\charac_G:  K_G(X)\otimes\IC &\to& \bigoplus_{[g]}K(X^g)^{C_G(g))}\otimes\IC\\
\left[E\right]&\mapsto&  \left( \bigoplus_{\lambda\in S^1}[\pi(E|_{
	X^g})_\lambda]\otimes\lambda \right)_{[g]}.
\end{eqnarray*}
Using the above Lemma we identify $K(X^g)^{C_G(g)}$ with $K(X^g/{C_G(g)})$.
\begin{theorem}
	The map $\charac_G$ is an isomorphism of complex vector spaces.
\end{theorem}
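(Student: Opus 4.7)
The plan is to show that both sides of $\charac_G$ fit into $G$-equivariant cohomology theories and that $\charac_G$ is a natural transformation between them; then Mayer--Vietoris and induction on the $G$-cells of $X$ reduce the claim to the case of a single orbit $X = G/H$.

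To set this up, I would first check naturality of $\charac_G$ with respect to $G$-maps. This requires that the $\lambda$-eigenbundle of $E|_{X^g}$ is a genuine $C_G(g)$-subbundle: this holds because $C_G(g)$ commutes with $g$ and therefore preserves the eigenspace decomposition on each fiber, while local triviality plus continuity of the diagonalization give the bundle structure. For the target side, the functor
$$X \longmapsto \bigoplus_{[g]} K(X^g)^{C_G(g)} \otimes \IC$$
inherits long exact sequences from ordinary complex $K$-theory, using that $X \mapsto X^g$ sends $G$-CW-pairs to CW-pairs and that taking $C_G(g)$-invariants is exact over $\IC$ since $C_G(g)$ is finite.

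Having reduced to $X = G/H$ for subgroups $H \leq G$, the left-hand side is $K_G(G/H) \otimes \IC \cong R(H) \otimes \IC \cong \Class(H)$. On the right, $(G/H)^g = \{aH : a^{-1}ga \in H\}$ carries the left $C_G(g)$-action, and the assignment $aH \mapsto [a^{-1}ga]_H$ descends to a bijection
$$(G/H)^g / C_G(g) \;\longleftrightarrow\; \{[h]_H : h \sim_G g\}.$$
Summing over $[g]_G$ and using that each $H$-conjugacy class meets a unique $G$-conjugacy class, the target collapses to $\Class(H)$ as well. Under these identifications, $\charac_G$ applied to a representation $V$ of $H$ (viewed as a $G$-equivariant bundle on $G/H$) is exactly the class function $h \mapsto \tr(h \mid V)$, i.e.\ the classical character isomorphism of finite group representation theory.

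The most delicate point is the bookkeeping for the cohomology-theory structure on the target and the verification that $\charac_G$ commutes with the connecting maps of pairs; the eigenspace construction must be compatible with the suspension isomorphism used to define $K_G^{-n}$. One can bypass this step via the Atiyah--Segal localization philosophy: $K_G(X) \otimes \IC$ is a module over $R(G) \otimes \IC \cong \Class(G)$, whose maximal ideals are indexed by conjugacy classes $[g]$, and localization at $[g]$ can be identified canonically with $K(X^g)^{C_G(g)} \otimes \IC$; patching these localizations gives the decomposition directly, with $\charac_G$ furnishing the identification.
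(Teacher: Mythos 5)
The paper gives no proof of this theorem; it only cites Atiyah--Segal, whose argument is the localization one you sketch at the very end: $K_G(X)\otimes\IC$ is a module over $R(G)\otimes\IC\cong\Class(G)$, one localizes at the maximal ideal attached to a conjugacy class $[g]$, identifies that localization with $K(X^g)^{C_G(g)}\otimes\IC$ via restriction to $X^g$ followed by $g$-eigenspace projection, and sums over $[g]$. Your primary route --- naturality, Mayer--Vietoris, induction over $G$-cells, and the orbit computation --- is a genuinely different and essentially correct strategy, and the orbit bookkeeping is exactly right: $(G/H)^g=\{aH: a^{-1}ga\in H\}$, the assignment $aH\mapsto[a^{-1}ga]_H$ induces a bijection $(G/H)^g/C_G(g)\leftrightarrow\{[h]_H: h\sim_G g\}$, the $H$-classes are partitioned by the $G$-classes they lie in, so the target collapses to $\Class(H)$, and under $K_G(G/H)\otimes\IC\cong R(H)\otimes\IC$ the map $\charac_G$ becomes the classical character isomorphism (the fiber of $G\times_H V$ over $aH$ carries the $g$-action $v\mapsto (a^{-1}ga)v$, so the eigenvalue data records $\chi_V(a^{-1}ga)$). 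What your approach buys is an elementary reduction to finite-group character theory. The one soft spot is the point you yourself flag: to run the five lemma you must work with the full $\IZ/2\IZ$-graded theory $K_G^*$ and verify that $\charac_G$ commutes with the connecting homomorphisms. That verification is not actually hard and should be carried out rather than bypassed --- $\charac_G$ factors as the restriction morphism $K_G^*(X)\to K_{C_G(g)}^*(X^g)$ (induced by a map of spaces and a subgroup inclusion, hence a morphism of cohomology theories) followed by projection onto eigenspace summands cut out by idempotents of $R(\langle g\rangle)\otimes\IC$ acting through the module structure, and module multiplications also commute with connecting maps --- because falling back on the localization theorem at that stage simply reproduces the cited proof and makes the cell induction redundant.
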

For a proof of the theorem see \cite{a-s1989}. %In order to obtain the formula for the induced character we need to recall the following result proved in \cite{segal1968}.

%\texttt{Quitar??}

%\begin{theorem}
%	Let $G$ be a finite group, let $\{\rho_i\}_i$ a complete set of irreducible representations of $G$, and let $X$ be a trivial $G$-space, we have the following decomposition for every $G$-vector bundle $E$ over $X$ 
%	$$E\cong \bigoplus_i(\rho_i\times X)\otimes\Hom_G(\rho_i\times X,E)$$
%	moreover the $G$-action over the right-hand side is concentrated over the first factor in the tensor product. The above decomposition induces a ring isomorphism
%	\begin{eqnarray*}
%	K_G(X)&\to &R(G)\otimes K(X)\\
%	\left[E\right]&\mapsto&\bigoplus_i\rho_i\otimes\Hom_G(\rho_i\times X,E).
%	\end{eqnarray*}
%\end{theorem}
\subsection{The induced bundle}

Now we will give an explicit construction of the induced vector bundle. It is a direct generalization of the induced representation defined for example in Section 3.3 in \cite{serre}.
%\begin{definition}Let $H$ be a subgroup of $G$ and let $\calu$ be a space with an action of $G$.
%Consider a $H$-action over $G\times \calu$ defined as
%\begin{align*}
%H\times G\times U&\to G\times U\\
%(h,g,u)&\mapsto(gh^{-1},hu).
%\end{align*}
%We denote by $G\times_H\calu$ the \emph{$G$-induction of $U$} defined as the orbit space $(G\times\calu)/H$.\end{definition}
%It is natural to think that if $\calu$ is endowed with additional structure, some of this should be appears in $G\times_H\calu$, in particular when $E$ is a $H$-vector bundle one immediately obtain that $G\times_HE$ is endowed with a canonical structure of $G$-vector bundle over $G\times_HX$. 

%\begin{definition}
%	We denote by $\Ind_H^G(E)$ (or simply by $\Ind(E)$ when $H$ and $G$ are clear from the context) to the $G$-vector bundle $i^*(G\times_HE)$ over $X$ obtained as the pullback of the canonical $H$-equivariant map
%	\begin{align*}
%	i:X&\to G\times_HX\\
%	x&\mapsto[(e,x)].
%%	\end{align*}
%\end{definition}

%It is clear that when $X=\pt$ we recover the usual construction of induction of representations. On the other hand, it can be extended to the K-theory groups, then we have a homomorphism of $K_G(X)$-modules
%$$\Ind_H^G:K_H(X)\to K_G(X).$$

%In order to prove a formula for the character of an induced $H$-vector bundle we need to give a more explicit description of the induced bundle.

Let $H\subseteq G$ be a subgroup of $G$ and $E\xrightarrow{\pi} X$ an $H$-vector bundle over a $G$-space $X$. If we choose an element from each left coset of $H$, we obtain a subset $R$ of $G$ called a \textit{system of representatives of $H\diagdown G$}; each $g\in G$ can be written uniquely as $g=sr$, with $r\in R=\{r_1,\ldots,r_n\}$ and $s\in H$, $G=\coprod_{i=1}^nHr_i$, we suppose that $r_1=e$ the identity of the group $G$. Consider the vector bundle $F=\bigoplus_{i=1}^n(r_i)^*E$, with projection $\pi_F:F\to X$ and consider the following $G$-action defined over $F$:

%For $g\in G$ the set $\{r_1g,\cdots,r_ng\}$ is a system of representatives of $H\diagdown G$. 
Let $f\in F$, then $$f=f_{r_1}\oplus\cdots\oplus f_{r_n},$$ where $f_{r_i}\in(r_i)^*E$. If $\pi_F(f)=x$ then $f_{r_i}=(x,e)$, where $e\in E_{r_ix}$.
% where $e\in E_{r_ix}$the fiber over $gx$ is $$F_{r_1gx}\oplus\ldots\oplus F_{r_ngx},$$ 
Let  $g\in G$, note that $r_ig^{-1}$ is in the same left coset of some $r_j$, i.e. $r_ig^{-1}=sr_j$, for some $s\in H$. Define $$g(f_{r_i})=(gx,s^{-1}e)\in (r_j)^*E,$$ and define the action of $g$ on $f$ by linearly.

%Note that the operation defined above is a $G$-action.
 Now we will see that $F$ does not depend on the set of representatives up to isomorphism. Let $\{r_1',\ldots,r_n'\}$ be another set of representatives of $H\diagdown G$ and let $F'=\bigoplus_{i=1}^n(r_i')^*E$. By reordering  we can assume that $r_i$ and $r_i'$ are in the same left coclass, then $r_i'r_i^{-1}\in H$.

%Now we show that $F$ is well defined up to isomorphism. If $G=\coprod_{i=1}^nHr_i'$, take other system of representatives $\{r_i',\ldots,r_n'\}$ of $H\diagdown G$. Let $$F'=\bigoplus_{i=1}^n(r_i')^*E.$$ We can assume that $r_i$ and $r_i'$ are in the same class.
 We have an isomorphism of vector bundles over $X$ \begin{align*}
 r_i'r_i^{-1}:(r_i)^*E&\rightarrow(r_i')^*E\\
 (x,e)&\to (x,r_i'r_i^{-1}e)
 \end{align*}inducing an isomorphism of $G$-vector bundles  $$r_1'r_1^{-1}\oplus\ldots\oplus r_n'r_n^{-1}:F\to F'.$$  We only need to verify that this map commutes with the action of $G$. To see this, let $g\in G$ and $f_{r_i}=(x,e)\in (r_i)^*E$, there exist $s,s'\in H$ such that 
 \begin{equation}\label{ind1}
 r_ig^{-1}=sr_j \text{ and }r_i'g^{-1}=s'r_j'.
 \end{equation} Note that $gf_{r_i}\in (r_j)^*E$, then 
 $$(r_j'r_j^{-1})g(f_{r_i})=(gx,r_j'r_j^{-1}s^{-1}e).$$
 
 On the other hand $$g(r_i'r_i^{-1}f_{r_i})=(gx,(s')^{-1}(r_i'r_i^{-1}e)),$$but we know from (\ref{ind1})
 $$(s')^{-1}r_i'r_i^{-1}=r_j'r_j^{-1}s^{-1}.$$
 Then the map $r_1'r_1^{-1}\oplus\ldots\oplus r_n'r_n^{-1}$ commutes with the $G$-action and then $F$ and $F'$ are isomorphic as $G$-vector bundles.
  %Since $r_ig$ and $r_i'g$ are in the same class, if $e_{r_1}\oplus\ldots\oplus e_{r_n}\in F$, then
%$$g(e_{r_1}\oplus\ldots\oplus e_{r_n})=r_i'r_i^{-1}(g(e_{r_1}\oplus\ldots\oplus e_{r_n})).$$ This shows that $F$ is well defined.
We will denote the $G$-vector bundle $F$ defined above by $\Ind_H^G(E)$. Summarizing we have.

\begin{theorem}
	Let $G$ be a finite group, let $H\subseteq G$ be a subgroup. Let $X$ be a $G$-CW-complex, and let $E$ be a $H$-vector bundle over $X$, there is a unique  $G$-vector bundle $\Ind_H^G(E)$ over $X$, up to isomorphism of $G$-vector bundles such that for every $G$-vector bundle $F$ over $X$ we have a natural identification$$\Hom_G(\Ind_H^G(E),F)\cong\Hom_H(E,\res_H^G(F)).$$
\end{theorem}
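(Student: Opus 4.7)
The construction of $\Ind_H^G(E) := \bigoplus_{i=1}^n (r_i)^*E$ together with the $G$-action has already been given (and shown to be independent of the choice of representatives up to isomorphism of $G$-vector bundles). So the plan is to (i) verify that the prescribed formula really defines a $G$-action by bundle automorphisms, (ii) establish the Frobenius-type adjunction, and (iii) deduce uniqueness formally from the adjunction.

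For (i) I would first check that the map $g \colon F \to F$ defined above is continuous and sends fibers linearly to fibers: by construction it shuffles the summands $(r_i)^*E \to (r_j)^*E$ according to the cocycle $r_ig^{-1}=sr_j$, and on each summand it is just the $H$-bundle automorphism $s^{-1}\colon E\to E$ pulled back along the shift. Then I would verify $(gg')\cdot f = g\cdot(g'\cdot f)$ and $e\cdot f = f$ by a direct computation using the associativity of the cocycle identity $r_ig^{-1}=sr_j$.

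For (ii), which is the heart of the proof, I would construct the bijection in both directions. Given a $G$-vector bundle map $\phi\colon \Ind_H^G(E)\to F$, restrict it to the summand $(r_1)^*E\cong E$ (recall $r_1=e$); the calculation $g=s\in H$, $r_1s^{-1}=s^{-1}r_1$ shows that the $G$-action on this summand restricts to the original $H$-action on $E$, so $\phi|_E$ is automatically $H$-equivariant. In the other direction, given an $H$-bundle map $\psi\colon E\to \res_H^G(F)$, define
\[
\tilde\psi\bigl((x,e)\bigr) \;=\; r_i^{-1}\cdot \psi(e) \in F_x, \qquad (x,e)\in(r_i)^*E,\; e\in E_{r_ix},
\]
and extend by linearity. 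To check $G$-equivariance, take $g\in G$ with $r_ig^{-1}=sr_j$; then
\[
\tilde\psi\bigl(g\cdot(x,e)\bigr)=\tilde\psi\bigl((gx,s^{-1}e)\bigr)=r_j^{-1}\psi(s^{-1}e)=r_j^{-1}s^{-1}\psi(e)=gr_i^{-1}\psi(e)=g\cdot\tilde\psi\bigl((x,e)\bigr),
\]
using $H$-equivariance of $\psi$ and $(sr_j)^{-1}=gr_i^{-1}$. A short computation then shows the two assignments $\phi\mapsto \phi|_E$ and $\psi\mapsto\tilde\psi$ are mutual inverses, and naturality in $F$ is immediate from the formulas.

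Finally, for (iii), uniqueness is a formal consequence of the universal property: any two $G$-vector bundles satisfying the stated adjunction represent the same functor $\Hom_H(E,\res_H^G(-))$ on the category of $G$-vector bundles over $X$, so the Yoneda lemma furnishes a canonical isomorphism between them. The main technical point that requires care is the $G$-equivariance verification for $\tilde\psi$; the rest is bookkeeping with the cocycle $r_ig^{-1}=sr_j$ that has already appeared in the independence-of-representatives argument.
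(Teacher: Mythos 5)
Your proposal is correct and follows essentially the same route as the paper: the two mutually inverse maps you describe (restriction of $\phi$ to the summand $(r_1)^*E\cong E$, and the extension $\tilde\psi\bigl((x,e)\bigr)=r_i^{-1}\psi(e)$ with the cocycle computation $r_j^{-1}s^{-1}=gr_i^{-1}$) are exactly the maps $r$ and $I$ in the paper's proof, including the same $G$-equivariance verification. Your added remarks on well-definedness of the action and on uniqueness via Yoneda are standard and consistent with what the paper leaves implicit.
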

\begin{proof}Only remains to prove the identification. Let $\xi\in\Hom_G(\Ind_H^G(E),F)$, recall that we have an inclusion of $H$-vector bundles 
	\begin{align*}E&\to\Ind_H^G(E)\\
	v\in E_x&\mapsto(x,v).\end{align*}

Define $r(\xi)\in\Hom_H(E,\res_H^G(F))$ as follows, if $v\in E_x$
$$r(\xi)(v)=\xi(x,v).$$

It is clear that $r(\xi)\in\Hom_H(E,\res_H^G(F))$. On the other hand if $\eta\in \Hom_H(E,\res_H^G(F))$, define $I(\eta):\Ind_H^G(E)\to F$ as follows, if $v_i\in r_i^*E$, then $v_i=(x,v)$ with $x\in X$ and $v\in E_{r_ix}$, then we define $$I(\eta)(v_i)=r_i^{-1}(\eta(v)).$$Extending linearly $I(\eta)$ to $\Ind_H^G(E)$.

Now we will see that $I(\eta)$ is $G$-equivariant. Let $g\in G$, let $s\in H$ such that $$r_ig^{-1}=sr_j,$$then $$g\cdot v_i=(gx,s^{-1}v).$$Now, 
\begin{align*}
I(\eta)(g\cdot v_i)&=I(\eta)(gx,s^{-1}v)\\
&=r_j^{-1}(\eta(s^{-1}v))\\&=r_j^{-1}s^{-1}\eta(v)\\&=gr_i^{-1}\eta(v)\\&=gI(\eta)(v_i).
\end{align*}Then $I(\eta)\in\Hom_G(\Ind_H^G(E),F)$. Now we will see that $r$ and $I$ are inverse of each other.
It is clear that  $r(I(\eta))=\eta$. On the other hand, 
\begin{align*}
I(r(\xi))(v_i)&=r_i^{-1}(r(\xi)(v))\\&=r_i^{-1}\xi(r_ix,v)\\&=\xi(x,v)\\&=\xi(v_i).
\end{align*}
\end{proof}
%\commentm{Sería bueno resumir esta construccion en un teorema.}

%\begin{proposition}
%	Let $E$ be a $H$-vector bundle over a $H$-space $X$, the bundle $F$ defined above is canonically isomorphic to $\Ind_H^G(E)$.
%\end{proposition}
%\begin{proof}
%	
%	\end{proof}
We have a formula for the character of an induced $H$-vector bundle, it is a particular case of  a formula for induced character of generalized cohomology theories in \cite{kuhninduction} and \cite{kuhnchar}. We include a proof for completeness.
%Now we are ready to prove a formula for the character of an induced $H$-vector bundle.

%\texttt{Expandir: usa la descripcion explícita de la inducción}
\begin{theorem}[Formula for the induced character]\label{Main Theorem}
	Let $X$ be a $G$-CW-complex, let $H$ be a subgroup of $G$, let $h$ be the order of $H$ and $E$ be a $H$-vector bundle, consider the map
	$$\charac_G\circ\Ind_H^G:K_H(X)\otimes\IC\to\bigoplus_{[g]}K(X^g)^{C_G(g)}\otimes\IC,$$
	let $R$ be a system of representatives of $H\diagdown G$. For each $g\in G$, we have
	\begin{align*}\charac_G(g)\circ\Ind_H^G([E])&=\bigoplus_{r\in R,r^{-1} gr\in H}r^*\left(\charac^H(r^{-1}gr)([E])\right)\\&=\frac{1}{h}\bigoplus_{r\in G,r^{-1} gr\in H}r^*\left(\charac^H(r^{-1}gr)([E])\right).\end{align*}
\end{theorem}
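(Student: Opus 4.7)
The plan is to compute $\charac_G(g)(\Ind_H^G[E])$ directly from the explicit model $\Ind_H^G(E)=\bigoplus_{i=1}^n (r_i)^*E$ constructed above, exploiting the explicit formula for the $G$-action on the summands.

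First I would restrict everything to $X^g$. Over a point $x\in X^g$ the fiber of $\Ind_H^G(E)$ is $\bigoplus_i E_{r_ix}$, and the $G$-action formula shows that $g$ sends the summand $(r_i)^*E$ into $(r_j)^*E$, where $r_j$ is the unique representative with $r_ig^{-1}\in Hr_j$. In the block-matrix expression of $g$ acting on $\bigoplus_i(r_i)^*E$, only the diagonal blocks ($j=i$) contribute to the trace, and more generally to each eigenbundle summand $\pi(\,\cdot\,)_\lambda$. The diagonal condition $r_ig^{-1}\in Hr_i$ reduces to $r_ig^{-1}r_i^{-1}\in H$; after the natural identification between right-coset and left-coset representatives this gives precisely the indexing set $\{r\in R:r^{-1}gr\in H\}$ of the statement.

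Next, for each such $r_i$ the diagonal action on $(r_i)^*E|X^g$ is, by the construction, multiplication on the fiber $E_{r_ix}$ by a specific element of $H$ (namely $(r_ig^{-1}r_i^{-1})^{-1}$), which after the coset-convention change becomes $r_i^{-1}gr_i$. Since $r_ix\in X^{r_i^{-1}gr_i}$ whenever $x\in X^g$, the action of $r_i$ on $X$ restricts to a map $X^g\to X^{r_i^{-1}gr_i}$, and pulling back along it gives a canonical isomorphism $(r_i)^*E|X^g\cong r_i^*(E|X^{r_i^{-1}gr_i})$ that intertwines the $g$-action with the $(r_i^{-1}gr_i)$-action. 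Decomposing into $\lambda$-eigenbundles and assembling $\sum_\lambda[\,\cdot\,]\otimes\lambda$ then identifies the contribution of this block to $\charac_G(g)(\Ind_H^G[E])$ as $r_i^*\bigl(\charac^H(r_i^{-1}gr_i)[E]\bigr)$. Summing over the diagonal indices yields the first equality.

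For the second equality I would argue that the summand indexed by $r\in R$ is unchanged when $r$ is replaced by any other element $sr$ ($s\in H$) of the same coset: the condition $(sr)^{-1}g(sr)\in H$ is equivalent to $r^{-1}gr\in H$ since $s\in H$, and by the $H$-equivariance of the character together with $(sr)^*=r^*s^*$ one has $(sr)^*\bigl(\charac^H((sr)^{-1}g(sr))[E]\bigr)=r^*\bigl(\charac^H(r^{-1}gr)[E]\bigr)$. Since each coset has $h$ elements, summing over all $r\in G$ with $r^{-1}gr\in H$ produces $h$ copies of the sum over $R$, and dividing by $h$ recovers the first formula. The main obstacle I expect is the careful bookkeeping: verifying that the identification $(r_i)^*E|X^g\cong r_i^*(E|X^{r_i^{-1}gr_i})$ genuinely intertwines the $g$-action with the $(r_i^{-1}gr_i)$-action, and keeping track of the conventions (left vs.\ right cosets, and the direction of the space map whose pullback is called $r_i^*$) so that the resulting element lies in $K(X^g)^{C_G(g)}$ independently of the choice of diagonal representatives.
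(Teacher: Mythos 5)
Your proposal is correct and follows essentially the same route as the paper's own proof: both compute $\charac_G(g)$ on the explicit model $\bigoplus_i (r_i)^*E$, discard the off-diagonal blocks of the permutation action on cosets, identify each diagonal block's contribution as $r^*\bigl(\charac^H(r^{-1}gr)[E]\bigr)$ via conjugation-invariance and compatibility of the character with pullbacks, and obtain the $\frac{1}{h}$ version by counting coset representatives. Your extra care with the intertwining isomorphism $(r_i)^*E|X^g\cong r_i^*(E|X^{r_i^{-1}gr_i})$ and the left/right coset conventions is a welcome refinement of the same argument.
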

\begin{proof}Our explicit definition of the induced bundle allows us to proof this result just by adapting the proof for representations contained in \cite{serre}. The vector bundle $F=\Ind_H^G(E)$ is the direct sum $\bigoplus_{i=1}^nr_i^*E$, with $R=\{r_1,\ldots,r_n\}$. $$H\diagdown G=\{Hr_1,\ldots,Hr_n\}.$$ 
	We know from the definition of the induced bundle that if we write $r_ig^{-1}$ in the form $sr_j$ with $r_j\in R$ and $s\in H$, then $g$ sends $r_i^*E$ to $r_j^*E$. Considering the action of $g$ in $H\diagdown G$ , we have that\[\charac_G(g)\left(\Ind_H^G(E)\right)=\charac_G(g)\left(\bigoplus_{Hr_i=Hr_ig^{-1}}r_i^*E\oplus\bigoplus_{Hr_i\neq Hr_ig^{-1}}r_i^*E\right)\]
	Note that $g$ acts in each term of the direct sum on the right hand side, hence the right hand side of the above equation can be written as
	\[\charac_G(g)\left(\bigoplus_{Hr_i=Hr_ig^{-1}}r_i^*E\right)\oplus\charac_G(g)\left(\bigoplus_{Hr_i\neq Hr_ig^{-1}}r_i^*E\right)\]
	
	We will see that $\charac_G(g)\left(\bigoplus_{Hr_i\neq Hr_ig^{-1}}r_i^*E\right)=0$. Because each 0-dimensional bundle is trivial, it suffices to check that this condition holds on fibers, i.e. 
	%Decir que en cada fibra tenemos una representación de $H$
	
	$$\charac_G(g)\left(\bigoplus_{Hr_i\neq Hr_ig^{-1}}r_i^*E\right)_x=0,$$ for all $x\in X^g$. If we fix a basis for $(r_i^*E)_x$, the trace of the matrix representing the action of $g$ is zero because $Hr_i\neq Hr_ig^{-1}$.\\
	
	Now, if $Hr_i=Hr_ig^{-1}$ we have that $r_igr_i^{-1}=s_i$ with $s_i\in H$. Thus as the character is invariant under conjugation $$\charac_G(g)(r_i^*E)=\charac_H(s_i)(r_i^*E).$$ Finally as the character commutes with pullbacks we have that,
	\begin{align*}\charac_G(g)(\Ind_H^G(E))&=\bigoplus_{r\in R, rgr^{-1}\in H}r_i^*\left(\charac_H(rgr^{-1})(E)\right)\\
	&=\frac{1}{h}\bigoplus_{s\in G, sgs^{-1}\in H}r_i^*\left(\charac_H(s^{-1}hs)(E)\right).\end{align*}	\end{proof}
%Here $X^n$ is a $S_n$-CW-complex. 

%\textbf{Remark:} If $G$ is a group,  $R(G)$ is the representation ring. $S_n$ is the symmetric group on $n$ letters. \\
\section{Wreath product and its action on $X^n$}
%Let $G$ be a finite group and $\frakS_n$ be the group of permutations of $n$ elements. 
Let $C$ be a set. There is a natural action of $\frakS_n$ on $C^n$ defined as
\[
\sigma\bullet (c_1,\ldots,c_n)=(c_{\sigma^{-1}(1)},\ldots, c_{\sigma^{-1}(n)}) 
\]
if $G$ is a group, we define the wreath product as the semidirect product 
\[
G_n=G\wr \frakS_n=G^n\rtimes \frakS_n.
\]
This section is dedicated to describe the conjugacy classes and centralizers of elements in $G_n$, we follow \cite[Chapter 1, App. B]{symmetric} and \cite{wang2000}.

%\texttt{Estoy adicionando esta explicación sobre conjugados en Sn, luego me dice si la quitamos}.

First, we must recall the conjugacy classes in $\frakS_n$. Two elements $s_1, s_2 \in \frakS_n$ are conjugate if their cycle factorization correspond to the same partition of $n$. For example the elements $(1,2)(3,4,5)$ and $(1,4)(2,3,5)$ are conjugated and correspond to the partition $5 = 2+3$. Note that every partition of $n$ can be view as a function $m:\{1,2,\cdots, n\}\to\mathbb{N}$ as follows. If $s \in \frakS_n$ then $m_s(r)$ is the number of $r$-cyces in $s$. Now in the general case, if $x=(\bar{g},s)\in G_n$, then $s$ can be decomposed as a product of disjoint cycles, if $z=(i_1i_2\ldots i_r)$  is one of these cycles, the element $g_{i_r}g_{i_{r-1}}\cdots g_{i_1}$ is called the \emph{cycle product} of $x$ corresponding to $z$. 

%\texttt{Necesitamos esta deficion?}

%The element $((g_{i_1},g_{i_2},\ldots,g_{i_r}),(i_1i_2\ldots i_r))$ is called an \emph{$(G,r)$-cycle of $x$}. 

Recall that  $G_*$ denotes the set of conjugacy classes of $G$. If $x=(\bar{g},s)\in G_n$, let $\rho(x) = m_x(r,c)$ denote the number of $r$-cycles in $s$ whose cycle product belongs to $c$, where $c\in G_*$ and $r\in\{1,2,\cdots n\}$.  In this way every element $x\in G_n$ determines a matrix $\rho(x)=m_x(r,c)$ of non-negative  integers such that $\sum_{r,c}rm_x(r,c)=n$.

%\texttt{Este es el párrafo anterior, no lo borré...}
%Let $x=(\bar{g},s)\in G_n$, then $s$ can be decomposed as a product of disjoint cycles, if $z=(i_1i_2\ldots i_r)$  is one of these cycles, the element $g_{i_r}g_{i_{r-1}}\cdots g_{i_1}$ is called the \emph{cycle product} of $x$ corresponding to $z$. The element $((g_{i_1},g_{i_2},\ldots,g_{i_r}),(i_1i_2\ldots i_r))$ is called an \emph{$(G,r)$-cycle of $x$}. Recall that we denote by $G_*$ to the set of conjugacy classes of $G$. For $c\in G_*$ and $r\geq 1$, let $m_r(c)$ denote the number of $r$-cycles in $s$ whose cycle product belongs to $c$. In this way every element $x\in G_n$ determines a matrix $(m_r(c))_{r,c}$ of non-negative  integers such that $\sum_{r,c}rm_r(c)=n$. 
%\texttt{Este es el párrafo anterior, no lo borré...}

For example let $G$ be the cyclic group, $\{g^0,g^1, g^2, g^3\}$, of $4$ elements generated by $g$ and $s=(1,2)(3,4,5) \in\mathfrak{S}_5$. If $x=(g,g,g,g,g,s)$ then $\rho(x) = m_x(r,c)$ looks like
\begin{table}[h]
\centering
\begin{tabular}{ccccc}
                           & $g^0$ & $g^1$ & $g^2$ & $g^3$                  \\
\multicolumn{1}{c|}{$r=1$} & 0     & 0     & 0     & \multicolumn{1}{c|}{0} \\
\multicolumn{1}{c|}{$r=2$} & 0     & 0     & 1     & \multicolumn{1}{c|}{0} \\
\multicolumn{1}{c|}{$r=3$} & 0     & 0     & 0     & \multicolumn{1}{c|}{1} \\
\multicolumn{1}{c|}{$r=4$} & 0     & 0     & 0     & \multicolumn{1}{c|}{0} \\
\multicolumn{1}{c|}{$r=5$} & 0     & 0     & 0     & \multicolumn{1}{c|}{0}
\end{tabular}
\end{table}

%\[
%\begin{pmatrix}
%0 & 0 & 0 & 0 \\
%0 & 0 & 1 & 0 \\
%0 & 0 & 0 & 1 \\
%0 & 0 & 0 & 0 \\
%0 & 0 & 0 & 0  
%\end{pmatrix}
%\]

%\commentm{Hay que decir en la matriz anterior que representa cada fila y columna de la matriz.}

%In an equivalent way, if $\rho(c)$ denotes a partition having $m_r(c)$ parts equal to $r$, for each $r\geq1$, then $\rho=(\rho(c))_{c\in G_*}$ is a partition value map on $G_*$, such that $|\rho|=\sum_{r,c}rm_r(c)=n$, 
the map $\rho:G_n\to M_{n,v}(\mathbb{Z})$ is called the \emph{type} of $x\in G_n$, where $v=|G|$. 

\begin{proposition}\label{conjugacyclasses}
	Two elements in $G_n$ are conjugate iff they have the same type.
\end{proposition}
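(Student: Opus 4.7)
My plan is to prove both implications, with the forward direction reduced to a single computation and the backward direction reduced to normalization of a single cycle.

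For the forward implication, I would take $x=(\bar g,s)\in G_n$ and compute $y=(\bar h,t)\,x\,(\bar h,t)^{-1}$ directly in the semidirect product. The permutation part of $y$ is $tst^{-1}$, which has the same cycle type as $s$; moreover, if $z=(i_1,i_2,\ldots,i_r)$ is a cycle of $s$ then $t\cdot z=(t(i_1),\ldots,t(i_r))$ is a cycle of $tst^{-1}$. Unwinding the semidirect-product multiplication, the new $G$-coordinate at position $t(i_k)$ equals $h_{t(i_k)}\,g_{i_k}\,h_{t(i_{k-1})}^{-1}$ (indices mod $r$), so the cycle product associated to $t\cdot z$ telescopes:
\[
h_{t(i_r)}g_{i_r}h_{t(i_{r-1})}^{-1}\cdot h_{t(i_{r-1})}g_{i_{r-1}}h_{t(i_{r-2})}^{-1}\cdots h_{t(i_1)}g_{i_1}h_{t(i_r)}^{-1}\;=\;h_{t(i_r)}\bigl(g_{i_r}\cdots g_{i_1}\bigr)h_{t(i_r)}^{-1}.
\]
Hence the cycle products are $G$-conjugate cycle by cycle, so $\rho(y)=\rho(x)$.

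For the converse, I would construct a canonical representative $x_\rho\in G_n$ for each admissible type $\rho$ and show every $x$ with that type is conjugate to $x_\rho$. The construction is in two stages. First, conjugating by a suitable $(e,t)\in\frakS_n\subseteq G_n$, I may rearrange the cycles of $s$ so that they occupy a standard family of consecutive index blocks, with cycles of length $r$ whose product lies in $c\in G_*$ placed in a fixed order dictated by $\rho$. Second, within a single cycle $(j,j+1,\ldots,j+r-1)$ with cycle product $c$, I want to show one can further conjugate by an element $(\bar h,e)\in G^n$ supported on that block to normalize the $G$-coordinates to $(e,\ldots,e,c_0)$ where $c_0$ is a chosen representative of $[c]_G$; this follows by solving a sequence of equations $h_{j+k}=h_{j+k-1}g_{j+k-1}^{-1}$ starting from $h_j=e$, and then applying the forward computation above to replace $c$ by any $G$-conjugate using a global conjugation in the last coordinate. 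Assembling these normalizations block by block yields $x_\rho$, which depends only on $\rho(x)$.

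The main obstacle, and really the only nontrivial input, is the telescoping identity for the cycle product under conjugation; once it is in hand, the forward direction is immediate and the backward direction is a bookkeeping exercise combining the $\frakS_n$-action on cycles with the cycle-by-cycle normalization in $G^n$. I would pay particular attention to conventions (the paper's cycle product is read $g_{i_r}g_{i_{r-1}}\cdots g_{i_1}$, and the $\frakS_n$-action on $G^n$ is $\sigma\bullet(g_1,\ldots,g_n)=(g_{\sigma^{-1}(1)},\ldots,g_{\sigma^{-1}(n)})$), since an off-by-one in either convention would scramble the telescoping.
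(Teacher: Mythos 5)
Your argument is correct in outline, but note that the paper does not actually prove this proposition: it simply cites Macdonald's Appendix B to Chapter 1 of \emph{Symmetric Functions and Hall Polynomials}, and what you have written is essentially the standard argument from that reference, so you are supplying the proof the paper omits rather than diverging from it. Your key computation checks out: with the action $\sigma\bullet(g_1,\ldots,g_n)=(g_{\sigma^{-1}(1)},\ldots,g_{\sigma^{-1}(n)})$ one finds $y_p=h_p\,g_{t^{-1}(p)}\,h_{ts^{-1}t^{-1}(p)}^{-1}$, which at $p=t(i_k)$ gives exactly your $h_{t(i_k)}g_{i_k}h_{t(i_{k-1})}^{-1}$, and the telescoping of the cycle product along $t\cdot z$ is right; this settles the forward direction. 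The one slip is in the normalization recursion for the converse: to kill the coordinate at position $j+k$ you need $h_{j+k}g_{j+k}h_{j+k-1}^{-1}=e$, i.e.\ $h_{j+k}=h_{j+k-1}g_{j+k}^{-1}$, not $h_{j+k}=h_{j+k-1}g_{j+k-1}^{-1}$; starting from $h_j=e$ this trivializes positions $j+1,\ldots,j+r-1$ and deposits $g_j g_{j+r-1}\cdots g_{j+1}$ (a conjugate of the cycle product) at position $j$, matching the paper's normal form $((g,1,\ldots,1),(1,\ldots,r))$ rather than your $(e,\ldots,e,c_0)$. Since you explicitly flag the off-by-one risk and the final diagonal conjugation on the block repairs the conjugacy-class ambiguity either way, this is a bookkeeping correction, not a gap; with that index fixed the proof is complete.
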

\begin{proof}
	See \cite[Appendix 1.B]{symmetric}
	\end{proof}

%Let $x=(\bar{g},s)\in G_n$ of type $(\rho(c))_{c}$ where $\rho(c)$ has $m_r(c)$ $r$-cycles. 
By the above proposition we can assume that every element $x\in G_n$ is conjugated to a product of  elements %such that any of the $(G,r)$-cycles has 
of the form
%$$g_{ur}(c)=
$$((g,1,\ldots,1),(i_{u_1},\ldots,i_{u_r})).$$
Denote by $g_r(c)=((g,1,\ldots,1),(1,\ldots,r)).$ %By a direct computation we obtain.
\begin{proposition}\label{centralizer}
The elements in the centralizer $C_{G_n}(g_n(c))$ are of the form $((gz,\ldots,\underbrace{z}_{k+1},\ldots,gz),(1,\ldots,n)^k)$, with $z\in C_G(g)$. Moreover   $C_{G_n}(g_n(c))\cong C_G(g)\times\langle (1,\ldots n)\rangle$.
\end{proposition}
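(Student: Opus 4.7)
The plan is to impose the commutation relation $y\, g_n(c) = g_n(c)\, y$ directly, where $y = (\bar h, \tau) \in G_n$ and $g_n(c) = ((g,1,\ldots,1),\sigma)$ with $\sigma = (1,\ldots,n)$. Expanding both products using the wreath-product multiplication induced by $\sigma \bullet (c_1,\ldots,c_n) = (c_{\sigma^{-1}(1)}, \ldots, c_{\sigma^{-1}(n)})$, comparison of the permutation parts gives $\tau\sigma = \sigma\tau$. Applying the standard fact $C_{\frakS_n}((1,\ldots,n)) = \langle (1,\ldots,n)\rangle$, we obtain $\tau = \sigma^k$ for a unique $k \in \{0,1,\ldots,n-1\}$.

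With $\tau = \sigma^k$ fixed, comparison of the vector parts componentwise gives the system $\bar g_i\, h_{i-1} = h_i\, \bar g_{i-k}$ (indices mod $n$), where $\bar g = (g,1,\ldots,1)$. Since $\bar g_j = g$ only at $j = 1$ and equals $1$ otherwise, outside the indices $i \in \{1, k+1\}$ these equations collapse to $h_{i-1} = h_i$, forcing $h_1 = \cdots = h_k$ and $h_{k+1} = \cdots = h_n$. The two boundary equations at $i=1$ and $i = k+1$ then produce $h_1 = g h_n$ and $h_{k+1} = h_k g^{-1}$; setting $a := h_1$ and $b := h_{k+1}$, these combine to $a = gb$ and, by substitution, to $a = g a g^{-1}$, forcing $a \in C_G(g)$. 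Writing $z := b$ we have $z \in C_G(g)$ (since $a = gz$ commutes with $g$ iff $z$ does), and this exhibits every centralizer element in the stated form $((gz,\ldots,gz,z,\ldots,z),\sigma^k)$ with the first $k$ entries equal to $gz$; the case $k = 0$ is absorbed into the same description with $a = z$, giving the diagonal copy of $C_G(g)$.

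For the isomorphism, a straightforward induction shows $g_n(c)^k = ((g,\ldots,g,1,\ldots,1),\sigma^k)$ with $k$ copies of $g$, and the diagonal embedding $\iota \colon C_G(g) \hookrightarrow G_n$, $z \mapsto ((z,\ldots,z),e)$, commutes with $g_n(c)$ precisely because $zg = gz$. These two observations imply that $\psi(z,\sigma^k) := \iota(z)\cdot g_n(c)^k$ defines a map $C_G(g)\times \langle \sigma\rangle \to C_{G_n}(g_n(c))$ landing in the centralizer and bijecting onto the set parametrized above (both sides have $n\cdot |C_G(g)|$ elements). The main obstacle I anticipate is the middle step: carefully tracking the chain equations around the two seams $i = 1$ and $i = k+1$ so that both the form of the vector part and the centralizer condition $z \in C_G(g)$ emerge uniformly across all $k$, including the degenerate case $k = 0$ where the two seams collide.
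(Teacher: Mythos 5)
Your proof is correct and is exactly the ``direct computation'' that the paper's proof consists of (the paper gives no details beyond that phrase); your argument supplies those details, correctly reducing to $\tau=\sigma^k$ via $C_{\frakS_n}(\sigma)=\langle\sigma\rangle$ and solving the chain of component equations, with the two seams and the $k=0$ case handled properly. Note also that your form $((gz,\ldots,gz,z,\ldots,z),\sigma^k)=\iota(z)\cdot g_n(c)^k$ is the correct reading of the proposition's slightly ambiguous underbrace notation.
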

\begin{proof}
	It follows from a direct computation.\end{proof}

We have described centralizers of elements in $G_n$ and in the next section we will use this description  to write $\charac_{G_n}$ in terms of $\charac_G$.

Let $X$ be a $G$-space, there is canonical $G_n$-action over $X^n$ defined from the $G$-action over $X$
\begin{align*}
G_n\times X^n&\to X^n\\
((\bar{g},\sigma),\bar{x})&\mapsto \bar{g}(\sigma\bullet\bar{x})
\end{align*}
where $\bar{g}$ acts component-wise.

In order to relate  $\charac_{G_n}$with with $\charac_G$ we need to describe the fixed point set of a representative of each conjugacy class of $G_n$. Let us start with the conjugacy classes of elements $(\bar{g},\sigma)$ where $\sigma$ is an $n$-cycle. To this end we will need the following result.
\begin{proposition}\label{cycle}Let $\zeta=((g,1,\ldots,1),\sigma)$ whith $g\in G$ and $\sigma$ is a $n$-cycle. There is a canonical homeomorphism

\[
\left(X^n\right)^{\zeta}/C_{G_n}(\zeta)\cong X^{g}/C_G(g).
\]

\end{proposition}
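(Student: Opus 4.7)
The plan is to identify both sides explicitly and then check that the obvious map between them is a homeomorphism of orbit spaces. First I would choose the representative $\sigma=(1,2,\ldots,n)$ so that
\[
\zeta\cdot (x_1,\ldots,x_n)=(g\,x_n,\,x_1,\,x_2,\ldots,x_{n-1}).
\]
Imposing the fixed-point condition $\zeta\cdot\bar{x}=\bar{x}$ gives $x_1=x_2=\cdots=x_n$ and $g\,x_n=x_n$, so the assignment $\varphi\colon X^g\to (X^n)^\zeta$, $x\mapsto(x,x,\ldots,x)$, is a homeomorphism. This is the first and essentially easy step.

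Next I would describe the action of $C_{G_n}(\zeta)$ on $(X^n)^\zeta$ through $\varphi$. By Proposition \ref{centralizer} every element of $C_{G_n}(\zeta)$ has the form $\xi_{z,k}=((gz,\ldots,\underbrace{z}_{k+1},\ldots,gz),\sigma^k)$ with $z\in C_G(g)$. Evaluating on $\varphi(x)=(x,\ldots,x)$, the permutation $\sigma^k$ acts trivially since all entries are equal, so
\[
\xi_{z,k}\cdot\varphi(x)=(gz\cdot x,\ldots,\underbrace{z\cdot x}_{k+1},\ldots,gz\cdot x).
\]
Since $x\in X^g$ and $z\in C_G(g)$, one has $gz\cdot x=zg\cdot x=z\cdot x$, so this tuple equals $\varphi(z\cdot x)$. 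Hence $\varphi$ intertwines the $C_{G_n}(\zeta)$-action on $(X^n)^\zeta$ with the natural $C_G(g)$-action on $X^g$, where the former factors through the surjection $C_{G_n}(\zeta)\twoheadrightarrow C_G(g)$, $\xi_{z,k}\mapsto z$.

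Passing to quotients, $\varphi$ descends to the required homeomorphism
\[
(X^n)^\zeta/C_{G_n}(\zeta)\;\xrightarrow{\;\cong\;}\;X^g/C_G(g),
\]
and naturality in $X$ is manifest from the construction. The only subtle point is verifying that the $gz$-entries and $z$-entries of $\xi_{z,k}$ really produce the same result on a fixed point, which hinges precisely on the compatibility $z\in C_G(g)$ together with $x\in X^g$; everything else is bookkeeping with the explicit wreath-product action.
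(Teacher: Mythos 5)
Your proof is correct and follows essentially the same route as the paper: identify $(X^n)^\zeta$ with the diagonal copy of $X^g$ via the fixed-point computation, then track the $C_{G_n}(\zeta)$-action using Proposition \ref{centralizer} and pass to quotients. If anything, you are slightly more explicit than the paper in noting that $gz\cdot x=z\cdot x$ for $x\in X^g$ and $z\in C_G(g)$, which is the observation that makes the orbit correspondence with $X^g/C_G(g)$ transparent.
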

\begin{proof}We can assume $\sigma=(1,\ldots,n)$.
Let $(x_1,\ldots,x_n)\in (X^n)^\zeta$, then 
\[
\zeta(x_1,\ldots,x_n)=(x_1,\ldots,x_n)
\]it implies
\[
(gx_n,x_1,\ldots,x_{n-1})=(x_1,\ldots,x_n).
\]
Therefore
\[
x_n=x_{n-1}=\cdots=x_1,\;\;\; gx_n=x_1,
\]
and then $(x_1,\ldots, x_n)=(y,\ldots, y)$ lies in the diagonal and $y\in X^g$. This proves that $(X^n)^{\zeta}\cong X^{g}$.  On the other hand, if $\bar{b}\in C_{G_n}(\zeta)$ then by Proposition \ref{centralizer} $$\bar{b}=(((gz,\ldots,\underbrace{z}_{k+1},\ldots,gz),\sigma^{k})$$ where $z\in C_G(g)$. Then we obtain
\[
\bar{b}(y,\ldots,y)=(((gz,\ldots,\underbrace{z}_{k+1},\ldots,gz),\sigma^{k})\cdot(y\ldots,y)=(gzy,\ldots,\underbrace{zy}_{k+1},\ldots,gzy),
\]
showing that the orbit of $(y,\ldots,y)$ by $C_{G_n}(\zeta)$ is  $$\{(gzy,\ldots,\underbrace{zy}_{k+1},\ldots,gzy)\,:\, z\in \,C_G(g)\}.$$ This proves the result.\end{proof}

%Now we can establish an identification for a general conjugacy class in $G_n$.

%\begin{proposition}
%	Let $x=(\bar{g},s)\in G_n$, the fixed point set $(X^n)^x$ is canonically homeomorphic to $$\prod_{c\in G_*}(X^c)^{m_r(c)}.$$Moreover the space $(X^n)^x/C_{G_n}(x)$ is canonically homeomorphic to $$\prod_{c\in G_*,r\geq1}S^{m_r(c)}(X^c/C_G(c)).$$
%\end{proposition}
%\begin{proof}
%	Note that every element in $G_n$ is conjugated to a product of elements of the form in Prop. \ref{cycle}. Then the first part follows to apply Prop. \ref{cycle} to each term. Moreover the homeomorphism is $G_{G_n}(x)$-equivariant where $C_{G_n}(x)$acts in the product $$\prod_{c\in G_*}(X^c)^{m_r(c)}$$via its identification given in Prop. \ref{centralizer}. Now by Prop. \ref{cycle}
%	$$(X^c)^{m_r(c)}/(C_{G_r}(g_r(c))\sim \frakS_{m_r(c)})\cong S^{m_r(c)}(X^c/C_G(c))$$
%	\end{proof}
\section{Fock space}

Let $X$ be a $G$-space, by the equivariant Bott periodicity theorem we know that $K^*_G(X)=K^0_G(X)\oplus K_G^1(X)$ is a $\IZ/2\IZ$-graded group. Denote by \begin{align*}
\calf_G(X)=\bigoplus_{n\geq0}K_{G_n}^*(X^n)\otimes\IC,&& \calf^q_G(X)=\bigoplus_{n\geq0}q^nK_{G_n}^*(X^n)\otimes\IC
\end{align*}
where $q$ is formal variable giving a $\IZ_+$-grading in $\calf_G^q(X)$. They both have a natural structure of abelian groups, we endow them with a product $\cdot$, defined as the composition of the induced bundle and the K\"unneth isomorphism $\boxtimes$ (see \cite{kunnethk-theory})

$$K_{G_n}^*(X^n)\times K_{G_m}^*(X^m)\xrightarrow{\boxtimes}K_{G_n\times G_m}^*(X^{n+m})\xrightarrow{\Ind}K_{G_{n+m}}^*(X^{n+m})$$

\begin{proposition}
	With the above operations $\calf_G^q(X)$ is a commutative $(\IZ_+\times\IZ/2\IZ)$-graded ring.
\end{proposition}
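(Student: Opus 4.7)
The plan is to check the ring axioms one at a time. The $(\IZ_+\times\IZ/2\IZ)$-grading is immediate: the K\"unneth map $\boxtimes$ preserves total $\IZ/2\IZ$-degree and the induction $\Ind_{G_n\times G_m}^{G_{n+m}}$ is a homomorphism of $\IZ/2\IZ$-graded groups, while the $q$-degrees add by construction. The unit is the class of the trivial line bundle over $X^0=\pt$ sitting in $q^0K_{G_0}^0(\pt)=\IC$; that $1\cdot a=a\cdot 1=a$ follows from the identifications $\{e\}\times G_n=G_n=G_n\times\{e\}$ and the fact that $\Ind_{G_n}^{G_n}=\id$.

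For associativity, I would exploit transitivity of induction through the two towers $G_n\times G_m\times G_k\leq G_{n+m}\times G_k\leq G_{n+m+k}$ and $G_n\times G_m\times G_k\leq G_n\times G_{m+k}\leq G_{n+m+k}$, together with the key compatibility that induction in one factor of a product group commutes with the external product in the other factor: for $H_i\leq G_i$ and bundles $E_i$ on $H_i$-spaces, one has $\Ind_{H_1\times H_2}^{G_1\times G_2}(E_1\boxtimes E_2)\cong \Ind_{H_1}^{G_1}(E_1)\boxtimes \Ind_{H_2}^{G_2}(E_2)$. This identity can be verified directly from the explicit description $\Ind_H^G(E)=\bigoplus_r r^*E$ of Section 2 by choosing coset representatives of $H_1\times H_2$ in $G_1\times G_2$ as products of coset representatives in each factor. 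Combining this with associativity of the K\"unneth product reduces both $(a\cdot b)\cdot c$ and $a\cdot(b\cdot c)$ to the common expression $\Ind^{G_{n+m+k}}_{G_n\times G_m\times G_k}(a\boxtimes b\boxtimes c)$.

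For the (graded) commutativity, the essential observation is that the block-swap $\tau\in\frakS_{n+m}\subset G_{n+m}$ exchanging the first $n$ and last $m$ coordinates conjugates the subgroup $G_n\times G_m$ onto $G_m\times G_n$ inside $G_{n+m}$. Since conjugation by an element of the ambient group acts as the identity on its equivariant K-theory of any $G_{n+m}$-space, we obtain $\Ind_{G_m\times G_n}^{G_{n+m}}(b\boxtimes a)=\Ind_{G_n\times G_m}^{G_{n+m}}(\tau^*(b\boxtimes a))$. The standard graded-commutativity of the K\"unneth external product in $\IZ/2\IZ$-graded K-theory gives $\tau^*(b\boxtimes a)=(-1)^{|a|\,|b|}\,a\boxtimes b$, whence $b\cdot a=(-1)^{|a|\,|b|}\,a\cdot b$, which is precisely commutativity in the $\IZ/2\IZ$-graded sense.

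The main technical obstacle is the compatibility of induction with external K\"unneth products across a product of groups; once this is granted, the remaining steps are formal manipulations of the explicit formulas from Section 2. The careful bookkeeping of signs in the $\IZ/2\IZ$-graded K\"unneth commutativity also deserves attention but is classical, as is the standard fact that inner automorphisms act trivially on equivariant K-theory.
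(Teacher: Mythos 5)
Your proposal is correct, and on associativity it is essentially the paper's argument: both reduce $(a\cdot b)\cdot c$ and $a\cdot(b\cdot c)$ to the common class $\Ind_{G_n\times G_m\times G_k}^{G_{n+m+k}}(a\boxtimes b\boxtimes c)$ by transitivity of induction along the two towers; you make explicit the intermediate compatibility $\Ind_{H_1\times H_2}^{G_1\times G_2}(E_1\boxtimes E_2)\cong\Ind_{H_1}^{G_1}(E_1)\boxtimes\Ind_{H_2}^{G_2}(E_2)$ and how to check it from the coset-representative model of $\Ind$, a step the paper uses silently. For commutativity the two routes genuinely differ. The paper argues through characters: it applies the induced-character formula of Theorem \ref{Main Theorem} to $\Ind_{G_n\times G_m}^{G_{n+m}}(E_1\boxtimes E_2)$ and to $\Ind_{G_m\times G_n}^{G_{n+m}}(E_2\boxtimes E_1)$, produces (by conjugating with a suitable $\gamma\in\frakS_{n+m}$) a bijection between the elements $r$ indexing the two sums, matches the summands term by term, and concludes equality because $\charac_{G_{n+m}}$ is an isomorphism after tensoring with $\IC$. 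You instead observe that the block swap $\tau$ conjugates $G_n\times G_m$ onto $G_m\times G_n$ inside $G_{n+m}$, that induction is insensitive to conjugation by elements of the ambient group, and then import the Koszul sign from the graded commutativity of the external product. Your version is shorter, does not need the character isomorphism (so it would even work before tensoring with $\IC$, which costs nothing here but is cleaner), and it tracks the sign $(-1)^{|a|\,|b|}$ explicitly, a point the paper's character comparison glosses over when it asserts ``the same character.'' The paper's route has the compensating virtue of rehearsing the induced-character bookkeeping that is reused immediately afterwards in the proof of Theorem \ref{symmetric}. Both arguments are complete once the unit and grading checks, which you supply and the paper omits, are added.
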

\begin{proof}
The associativity follows from the following fact. Let $[E_1]\in K_{G_n}^*(X^n)$, $[E_2]\in K_{G_m}^*(X^m)$ and $[E_3]\in K_{G_k}^*(X^k)$, then\begin{align*}(E_1\cdot E_2)\cdot E_3&\cong\Ind_{G_{n+m}\times G_k}^{G_{n+m+k}}\left(\Ind_{G_n\times G_n}^{G_{n+m}}\left(E_1\boxtimes E_2\right)\boxtimes E_3\right)\\&\cong\Ind_{G_{n}\times G_m\times G_k}^{G_{n+m+k}}\left(E_1\boxtimes E_2\boxtimes E_3\right)\\&\cong \Ind_{G_{n}\times G_{m+k}}^{G_{n+m+k}}\left(E_1\boxtimes\Ind_{G_m\times G_k}^{G_{m+k}}\left(E_2\boxtimes E_3\right)\right).\end{align*}
For the graded commutativity, let $[E_1]\in K_{G_n}^*(X^n)$ and $[E_2]\in K_{G_m}^*(X^m)$, we will prove that $E_1\cdot E_2$ and $E_2\cdot E_1$ has the same character as $G_{n+m}$-vector bundles over $X^{n+m}$.

Consider two inclusions of $\frakS_{n}$ into $\frakS_{n+m}$. The first one is the inclusion by the first $n$ letters denoted by $S_n\xrightarrow{i_1^n}S_{n+m}$, the second one is the inclusion by the last $n$ letters denoted by $S_n\xrightarrow{i_2^m}S_{n+m}$. Let $x=(\bar{g},\sigma)\in G_{n+m}$ and let $r=(\bar{h},\tau)\in G_{n+m}$, such that $r^{-1}xr\in G_n\times G_m$, then there is $\eta_1\in \frakS_n$ and $\eta_2\in \frakS_m$ such that $\tau^{-1}\sigma\tau=i_1(\eta_1)i_2(\eta_2)$, but $i_1(\eta_1)i_2(\eta_2)$ is conjugated in $\frakS_{n+m}$ to $i_1(\eta_2)i_2(\eta_1)$, then there is $\gamma\in\frakS_{n+m}$ such that $\gamma^{-1}(\tau^{-1}\sigma\tau)\gamma=i_1(\eta_2)i_2(\eta_1)$. Then
$$((e,\ldots,e),\gamma^{-1})(r^{-1}xr)((e,\ldots,e),\gamma)\in G_m\times G_n.$$ It implies that we have bijective correspondence between elements $r\in G_{n+m}$ such that $r^{-1}xr\in G_n\times G_m$ and elements $s\in G_{n+m}$ such that $s^{-1}xs\in G_m\times G_n$. Then if we apply Theorem \ref{Main Theorem}, the number of terms in each direct sum computing $$\Ind_{G_n\times G_m}^{G_{n+m}}(E_1\boxtimes E_2)\text{ and }\Ind_{G_m\times G_n}^{G_{n+m}}(E_2\boxtimes E_2)$$ are the same. Moreover, for every $(\alpha,\beta)\in G_n\times G_m$, $$\charac_{G_n\times G_m}(\alpha,\beta)(E_1\boxtimes E_2)\cong\charac_{G_m\times G_n}(\beta,\alpha)(E_2\boxtimes E_1),$$then we have $E_1\cdot E_2$ and $E_2\cdot E_1$ have the same character, then the product is commutative. The other properties follows directly.

\end{proof}

\begin{definition}
	Let $R$ be a commutative ring, the graded-symmetric algebra of a $\IZ$-graded $R$-module $M$ (denoted by $\mathcal{S}(M)$) is the quotient of the tensor algebra of $M$ by the ideal $I$ generated by elements of the form
	\begin{enumerate}
		\item $x\otimes y -(-1)^{\deg(x)\deg(y)}(y\otimes x)$
		\item $x\otimes x$, when $\deg(x)$ is even.
	\end{enumerate}
\end{definition}%\commentm{Incluir una definicion de $\mathcal{S}$}
Now we will give another proof of the description of $\calf_G^q(X)$ as a graded-symmetric algebra given in \cite{segal1996} or Theorem 3 in \cite{wang2000}.
\begin{theorem}\label{symmetric}
	There is an isomorphism of $(\IZ_+\times\IZ/2\IZ)$-graded algebras $$\Phi:\calf_G^q(X) \to \mathcal{S}(\oplus_{n\geq1}q^nK_G^*(X)\otimes\IC).$$
\end{theorem}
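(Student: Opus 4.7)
The plan is to construct $\Phi$ via the universal property of the graded-symmetric algebra and verify bijectivity and multiplicativity by passing to the Atiyah--Segal character.

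First I would apply the Atiyah--Segal character theorem and classify the summands. By Proposition \ref{conjugacyclasses} the conjugacy classes of $G_n$ are indexed by \emph{types}, i.e.\ finitely supported functions $m:\IZ_+\times G_*\to\IZ_{\geq 0}$ with $\sum_{r,c}rm(r,c)=n$; a representative of the class of type $m$ is the concatenation, for each pair $(r,c)$, of $m(r,c)$ disjoint copies of $g_r(c)$. Iterating Proposition \ref{cycle} on each cycle block and observing that the cycle-permuting part of the centralizer acts as $\frakS_{m(r,c)}$ on identical blocks yields a homeomorphism
\[
(X^n)^x/C_{G_n}(x) \;\cong\; \prod_{r,c} SP^{m(r,c)}\bigl(X^{g_c}/C_G(g_c)\bigr).
\]
Combining this with Kunneth and the identification $K^*(Y^m/\frakS_m)\otimes\IC\cong S^m(K^*(Y)\otimes\IC)$ (super-symmetric, since the $\frakS_m$-action on $\IZ/2\IZ$-graded tensor powers carries Koszul signs), the Atiyah--Segal decomposition assembles into a graded vector space isomorphism
\[
\calf_G^q(X) \;\cong\; \bigotimes_{r\geq 1,\; c\in G_*} \s\bigl(q^r K^*(X^{g_c}/C_G(g_c))\otimes\IC\bigr) \;\cong\; \s\Bigl(\bigoplus_{n\geq 1} q^n K_G^*(X)\otimes\IC\Bigr),
\]
where the last step recollects the $[g]$-components via Atiyah--Segal applied to $K_G^*(X)\otimes\IC$.

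Second, I would define $\Phi$ on generators and extend multiplicatively using the universal property of $\s$. For $v\in K_G^*(X)\otimes\IC$ with Atiyah--Segal decomposition $v=\sum_{[g]}v_g$, send $q^nv$ to the unique element of $q^nK_{G_n}^*(X^n)\otimes\IC$ whose character is supported on the conjugacy classes of single $n$-cycles and equals $v_g$ on $[g_n([g])]$ under the identification of Proposition \ref{cycle}. The vector-space computation of the previous paragraph then shows that this multiplicatively-extended $\Phi$ is a graded linear bijection.

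Third, I must verify that $\Phi$ is a ring homomorphism. Since multiplicativity is built into its definition on the $\s$-side, it suffices to check, for generators $q^nv$ and $q^mw$, that the product $\Phi(q^nv)\cdot\Phi(q^mw)=\Ind_{G_n\times G_m}^{G_{n+m}}$ of the Kunneth product of the prescribed classes has the character corresponding to the image of $q^{n+m}(v\otimes w)$ in the symmetric algebra. Here the induced character formula of Theorem \ref{Main Theorem} is the crucial input: evaluating at a representative of a type $m'$ in $G_{n+m}$, the sum over cosets collapses to those conjugating the representative into $G_n\times G_m$; type-compatibility forces vanishing outside types supported on one $n$-cycle and one $m$-cycle, and on the surviving types the coset count reproduces the combinatorial factor implicit in the $\frakS_{m'(r,c)}$-symmetrization, including the Koszul signs when odd-degree classes are exchanged. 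The main obstacle is precisely this bookkeeping: aligning the coset count from Theorem \ref{Main Theorem} with the $\frakS_{m(r,c)}$-symmetrizations inherent in $\s$. The most delicate case is the diagonal one $(n,[g])=(m,[h])$, where two identical cycle blocks merge into a symmetric square and the super-symmetric relation $x\cdot x=0$ (for $\deg x$ odd) must emerge from the $\frakS_2$-action in the centralizer of the corresponding type in $G_{2n}$; this is the combinatorial heart of the statement and is where the explicit induced character formula earns its keep.
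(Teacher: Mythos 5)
Your proposal is correct in substance but takes the Segal--Wang route rather than the paper's. You establish the underlying graded vector-space isomorphism a priori, by decomposing every fixed-point set $(X^n)^x/C_{G_n}(x)$ for an arbitrary type as a product of symmetric products $SP^{m(r,c)}(X^{g_c}/C_G(g_c))$ and invoking $K^*(Y^m/\frakS_m)\otimes\IC\cong S^m(K^*(Y)\otimes\IC)$. The paper only ever uses Proposition \ref{cycle} for a single $n$-cycle; it proves surjectivity of $\Phi$ by induction on $n$ (products of classes from $K_{G_k}^*(X^k)$ and $K_{G_{n-k}}^*(X^{n-k})$ for $0<k<n$ exhaust the character components indexed by non-$n$-cycle types, while the generator $\phi$ supplies the $n$-cycle component), and extracts injectivity separately from the induced character formula. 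Your approach buys a cleaner global picture and makes the Fock-space count manifest, at the cost of needing the full symmetric-product decomposition for arbitrary types, which the paper never states or proves; the paper's induction needs less input but leaves the combinatorics of the set $J$ of non-$n$-cycle types more implicit. Both proofs define $\Phi$ identically on generators and both lean on Theorem \ref{Main Theorem} at the same juncture.

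One organizational slip worth fixing: since you define $\Phi$ by the universal property of $\s$, it is a ring homomorphism by construction (the images of the generators super-commute because the target algebra is super-commutative), so your third paragraph is not really verifying multiplicativity. It is the verification that $\Phi$ is a bijection --- which your second paragraph asserts prematurely, as equality of graded dimensions alone does not make a particular map bijective. The character computation you outline (products of generators are supported exactly on the types assembled from one $n$-cycle and one $m$-cycle, with the coset count from Theorem \ref{Main Theorem} matching the $\frakS_{m(r,c)}$-symmetrization and the Koszul signs, including the degenerate case $x\cdot x=0$ in odd degree) is precisely what upgrades the dimension count to bijectivity of $\Phi$; the content is right even if the label is wrong.
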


\begin{proof}
	First note that using $\charac_{G_n}$ we can define an injective group homomorphism in the following way. Consider the following sequence of maps
	$$K_{G}^*(X)\otimes\IC\xrightarrow{\cong}\bigoplus_{g\in G_{*}}K^*(X^g)^{C_{G}(g)}\otimes\IC\xrightarrow{\lambda}\bigoplus_{x\in G_{n*}}K^*((X^{n})^x)^{C_{G_n}(x)}\otimes\IC\xrightarrow{\cong}K_{G_n}^*(X^n)\otimes\IC$$
	where the map $\lambda$ is given by the assigning  $[((g,1,\ldots,1),(1,\ldots,n))]_{G_n}$ to the conjugacy class $[g]_G$ and using the identification in Proposition \ref{cycle}. This map is certainly injective. Define $$\phi:K_{G}^*(X)\otimes\IC\to K_{G_n}^*(X^n)\otimes\IC$$ by the composition of the above sequence so that $\phi$ is injective and by the universal property of the graded-symmetric algebra we have a unique map $$\Phi:\mathcal{S}\left(\bigoplus_{n\geq1}K_G^*(X)\otimes\IC\right)\to \calf^q_G(X)$$extending $\phi$.
	
	Suppose inductively that $\im(\Phi)$ contains $K_{G_k}^*(X^k)\otimes\IC$ for $k<n$. Then by induction we know that the image of the following composition$$\mathcal{S}\left(\oplus_{n\geq1}q^nK_G^*(X)\otimes\IC\right)\times \mathcal{S}\left(\oplus_{n\geq1}q^nK_G^*(X)\otimes\IC\right)\xrightarrow{\Phi\times\Phi}\calf^q_{G}(X)\times \calf^q_{G}(X)\xrightarrow{\cdot}\calf^q_{G}(X)$$contains $$K_{G_k}^*(X^k)\otimes\IC\cdot K_{G_{n-k}}^*(X^{n-k})\otimes\IC\subseteq K_{G_n}^*(X)\otimes\IC.$$ Now we have that the image under  $\charac_{G_n}$ of
	
	$$\bigoplus_{k=1}^{n-1} ( K_{G_k}^*(X^k)\otimes\IC)\cdot( K_{G_{n-k}}^*(X^{n-k})\otimes\IC)$$ coincides with  $$\bigoplus_{x\in J}K^*(((X^n)^x)/C_{G_n}(x))\otimes\IC,$$where $J$ is the set of conjugacy classes in $G_n$  such that for every $c$, $m_\bullet(n, c)=0$, in other words $J$ is the set of conjugacy classes whose components in $\frakS_n$ are not an $n$-cycle.
	
	On the other hand if $x=((g,1,\ldots,1),(1,\ldots,n))$ for some $g\in G$, then  Proposition \ref{cycle} gives us 
	 that $\im(\charac_{G_n}\circ\Phi)$ contains $K^*((X^n)^x)^{C_{G_n}(x)}$. Finally, since $\charac_{G_n}$ is an isomorphism we can conclude that $\Phi$ is surjective.
	 
	 To see that $\Phi$ is injective we can use the formula for the induced character, because this formula implies that if $A\in \mathcal{S}(\bigoplus_{k\geq1}q^kK_G^*(X)\otimes\IC)$ is not zero then there exists $n$ and $x\in G_n$ such that $\charac_{G_n}(x)(\Phi(A))\neq0$, then $\Phi(A)\neq0$.

	\end{proof}
\section{Pullback of groups}\label{section5}
Let $\Gamma$ be a group fitting into the following pullback diagram 
\begin{equation}\label{pullbackrep}
\xymatrix{
	\Gamma \ar[r]^{p_2}\ar[d]_{p_1} & G\ar[d]_{\pi_2}\\
	H \ar[r]^{\pi_1} &  K}\end{equation}
If the group $\Gamma$ comes from a diagram \ref{pullbackrep} then it is isomorphic to a subgroup of $G\times H$, namely $\Gamma\cong\{(g,h)\in G\times H\mid \pi_1(g)=\pi_2(h)\}$.  When maps $\pi_1$ and $\pi_2$ are clear from the context we denote $\Gamma$ by $G\times_KH$. We suppose that $\pi_1$ and $\pi_2$ are surjective.

 In this section we describe the class function ring of $\Gamma$ in terms of the class function rings of $G$, $H$ and $K$. In order to obtain this description we need that $\Gamma$ satisfies the following condition.
 
 \begin{definition}\label{conjugacy-closed}
 	Let $G$ be a finite group and let $H\subseteq G$ be a subgroup, let $[h]_H\in H_*$, we say that $[h]_H$ is closed in $G$ if, $$[h]_H=[h]_G\cap H.$$  We say that $H$ is conjugacy-closed  in  $G$ if, for every $h\in H$, $[h]_H$ is closed in $G$. 
 \end{definition}

\begin{example}%\footnote{Taken from: \url{https://groupprops.subwiki.org/wiki/Category:Instances_of_conjugacy-closed_subgroups}}
	The following are examples of conjugacy-closed subgroups:
	\begin{itemize}
		\item The general linear groups over subfields are conjugacy-closed. 
		\item The symmetric group is conjugacy-closed in the general linear group. %Lemma above. 
		\item The symmetric group on subsets are conjugacy-closed.
		\item The orthogonal group is conjugacy-closed in the general linear group over real numbers. 
		\item The unitary group is conjugacy-closed in the general linear group.
	\end{itemize}
\end{example}
\begin{remark} Let $G$ and $H$ be groups\\

\begin{itemize}
		
	\item If $H\subseteq G$ is conjugacy-closed in $G$, then the pullback of the inclusion$$i^*:\Class(G)\to\Class(H)$$ is surjective.

	\item If $H$ is a retract in $G$, the pullback of the inclusion$$i^*:R(G)\to R(H)$$is surjective.
	\end{itemize}
\end{remark}
 When $\Gamma$ is conjugacy-closed  in $G\times H$, we have a way to express the class function ring of $\Gamma$ in terms of the class function rings of $G$, $H$ and $K$. The same is true for the representations ring when $\Gamma$ is a retract of $G\times H$.
\subsection{The class function ring of a pullback}\label{section4}
Consider a pullback diagram of finite groups such as (\ref{pullbackrep}). If we apply the representation ring functor we obtain the following diagram
\begin{equation}\label{pullbackrepring}
\xymatrix{
	R(\Gamma)  & R(G)\ar[l]^{p_2^*}\\
	R(H)\ar[u]_{p_1^*}  &  R(K)\ar[l]^{\pi_1^*}\ar[u]_{\pi_2^*}}\end{equation}This diagram endows the rings $R(G)$ and $R(H)$ with a $R(K)$-module structure. A similar statement is true changing the representation ring by the class function ring. We will prove that if $\Gamma$ is a retract of $G\times H$, then the diagram (\ref{pullbackrepring}) is a pushout. In fact, we have the following theorem.

\begin{theorem}\label{pullbackrepringiso}
	Let $\Gamma$, $G$, $H$ and $K$ be finite groups such as in the diagram (\ref{pullbackrep}). If $\Gamma$ is conjugacy-closed  in $G\times H$, there is an isomorphism $$m:\Class(G)\otimes_{\Class(K)}\Class(H)\to \Class(\Gamma)$$of $\Class(K)$-modules%Where the $\Class (S)$-module structure in the left hand side is given by pullback 
	
	Moreover, if $\Gamma$ is a retract of $G\times H$, we have an isomorphism
	$$f:R(G)\otimes_{R(K)}R(H)\to R(\Gamma)$$of $R(K)$-modules.
\end{theorem}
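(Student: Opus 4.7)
The plan is to construct the map $m$ concretely, verify that it is well-defined on the tensor product, and then deduce bijectivity from a dimension count that uses the conjugacy-closed hypothesis decisively. On elementary tensors I would set
$$m(\alpha\otimes\beta)(g,h)=\alpha(g)\,\beta(h),\qquad (g,h)\in\Gamma.$$
The right-hand side is a class function on $\Gamma$, and the $\Class(K)$-balanced property falls out of the defining equation $\pi_2(g)=\pi_1(h)$ of $\Gamma$: for any $\phi\in\Class(K)$, both $m\bigl((\alpha\cdot\pi_2^*\phi)\otimes\beta\bigr)$ and $m\bigl(\alpha\otimes(\pi_1^*\phi\cdot\beta)\bigr)$ evaluate at $(g,h)$ to $\alpha(g)\beta(h)\phi(\pi_2(g))=\alpha(g)\beta(h)\phi(\pi_1(h))$.

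To prove $m$ is an isomorphism I would first decompose both sides along the conjugacy classes of $K$. As $\Class(K)$-modules one has
$$\Class(G)=\bigoplus_{[k]\in K_*}\Class(G)_{[k]},\qquad \Class(G)_{[k]}=\bigl\langle e_{[g]_G}:\pi_2(g)\sim_K k\bigr\rangle,$$
and similarly for $\Class(H)$; this uses that surjectivity of $\pi_2$ makes $\pi_2([g]_G)=[\pi_2(g)]_K$. Because the orthogonal idempotents $e_{[k]}\in\Class(K)$ each act as projection onto a single summand, the tensor product collapses to
$$\Class(G)\otimes_{\Class(K)}\Class(H)=\bigoplus_{[k]\in K_*}\Class(G)_{[k]}\otimes_{\IC}\Class(H)_{[k]},$$
of dimension $\sum_{[k]}a_{[k]}b_{[k]}$, where $a_{[k]}=|\{[g]\in G_*:[\pi_2(g)]_K=[k]\}|$ and $b_{[k]}$ is defined analogously. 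The conjugacy-closed hypothesis, together with surjectivity of $\pi_1,\pi_2$, yields a bijection
$$\Gamma_*\;\longleftrightarrow\;\{([g],[h])\in G_*\times H_*:[\pi_2(g)]_K=[\pi_1(h)]_K\},$$
so $|\Gamma_*|$ equals the same sum. Surjectivity of $m$ is then immediate: $m(e_{[g]_G}\otimes e_{[h]_H})$ is supported exactly on $([g]_G\times[h]_H)\cap\Gamma=[(g,h)]_\Gamma$ and equals the basis element $e_{[(g,h)]_\Gamma}$ there; a surjection of finite-dimensional vector spaces of equal dimension is an isomorphism.

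For the representation-ring assertion I would first note that a retract is automatically conjugacy-closed, since any conjugation relation in $G\times H$ becomes one in $\Gamma$ upon applying the retraction $r$. The map $f(V\otimes W)=(V\boxtimes W)|_\Gamma$ is well-defined by the same balanced-product check as for $m$, and the character map intertwines $f$ with $m$; in particular $f\otimes\IC=m$ is an isomorphism. Surjectivity of $f$ uses the retraction essentially: given an irreducible $\Gamma$-representation $V$, pull it back via $r$ to a $(G\times H)$-representation, decompose via K\"unneth as a finite $\IZ$-linear combination $\sum m_i V_i\boxtimes W_i$ of external tensor products of irreducibles, and restrict back to $\Gamma$; since $r|_\Gamma=\id_\Gamma$ this recovers $V=\sum m_i f(V_i\otimes W_i)$.

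The hard part is injectivity of $f$. Since $f\otimes\IC$ is an isomorphism the kernel of $f$ is $\IZ$-torsion, and because $R(\Gamma)$ is torsion-free and $f$ is surjective one gets a splitting and a decomposition $R(G)\otimes_{R(K)}R(H)\cong R(\Gamma)\oplus T$ with $T$ torsion; the challenge is to eliminate $T$. I would argue that the retraction produces an explicit integral section $s:R(\Gamma)\to R(G)\otimes_{R(K)}R(H)$ sending $V$ to the class of $r^*V$ under $R(G\times H)\cong R(G)\otimes_{\IZ} R(H)\twoheadrightarrow R(G)\otimes_{R(K)}R(H)$, and verify that $s\circ f=\id$ by checking that $r^*\bigl((V\boxtimes W)|_\Gamma\bigr)$ and $V\boxtimes W$ represent the same element in $R(G)\otimes_{R(K)}R(H)$ after imposing the amalgamation relations. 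This is the main computational difficulty, and is precisely the content of the retract hypothesis beyond conjugacy-closedness: one must show that the amalgamation ideal over $R(K)$ already exhausts the kernel of restriction $R(G\times H)\to R(\Gamma)$.
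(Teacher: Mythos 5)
Your proof of the class--function statement is correct, but it takes a genuinely different route from the paper. The paper compares kernels: it places $\Class(G)\otimes_{\IC}\Class(H)\cong\Class(G\times H)$ over both sides, notes that conjugacy-closedness makes the restriction $i^*:\Class(G\times H)\to\Class(\Gamma)$ surjective, and then shows $\ker(i^*)$ equals the image of the amalgamation relations by an orthogonality argument (any class function on $G\times H$ vanishing on $\Gamma$ and orthogonal to the relation ideal must vanish, using that $\pi_1(g)\sim_K\pi_2(h)$ forces $(g,h)$ to be conjugate into $\Gamma$, and otherwise some $\xi\in\Class(K)$ separates the two). You instead exploit that $\Class(K)$ is a product of copies of $\IC$ indexed by $K_*$, so the balanced tensor product splits into blocks $\Class(G)_{[k]}\otimes_{\IC}\Class(H)_{[k]}$, and you match dimensions against $|\Gamma_*|$ via the bijection $\Gamma_*\leftrightarrow\{([g],[h]):[\pi_2(g)]_K=[\pi_1(h)]_K\}$; both the injectivity (conjugacy-closedness) and surjectivity (surjectivity of the $\pi_i$) of that bijection check out, and your observation that $m(e_{[g]}\otimes e_{[h]})$ restricts to the indicator of the single class $[(g,h)]_\Gamma$ gives surjectivity of $m$ directly. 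Your argument is arguably more transparent: it exhibits explicit bases on both sides, whereas the paper's orthogonality computation hides the same combinatorics.

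For the ``moreover'' part there is a genuine gap, which you yourself flag: you establish well-definedness and surjectivity of $f$ (the retract argument for surjectivity is fine, and your remark that a retract is automatically conjugacy-closed is correct), but injectivity is left as ``the main computational difficulty.'' The proposed section $s(V)=[r^*V]$ does not obviously satisfy $s\circ f=\id$: one must show that $r^*\bigl((V\boxtimes W)|_\Gamma\bigr)-V\boxtimes W$ lies in the amalgamation ideal of $R(G)\otimes_{\IZ}R(H)$ generated by $\pi_2^*\xi\otimes 1-1\otimes\pi_1^*\xi$, and nothing in your write-up establishes this. Knowing that $f\otimes\IC$ is an isomorphism only tells you $\ker f$ is torsion, and since $R(G)\otimes_{R(K)}R(H)$ is a quotient of a free abelian group it could a priori contain torsion, so this does not close the argument. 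To be fair, the paper's own treatment of this half is a one-line assertion that ``the same argument works,'' even though its complex-orthogonality step does not transfer integrally without further justification; but as written your proposal proves only the $\Class$ statement, not the $R$ statement.
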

\begin{proof}
	In order to avoid confusion, in this proof we denote the product on $\Class(\Gamma)$, $\Class(G)$ and $\Class(H)$ by $\cdot$ and the generators of the tensor product by $\rho\otimes\gamma$.
	
	The map $f$ is defined as
	\begin{align*}
	f:\Class(G)\otimes_{\Class(K)}\Class(H)&\to \Class(\Gamma)\\
	\rho\otimes\gamma&\mapsto p_1^*(\rho)\cdot p_2^*(\gamma)\end{align*}

	First we prove that the map $f$ is well defined. Let $\xi\in \Class(K)$, $\rho\in \Class(G)$ and $\gamma\in \Class(H)$. %it is enough to prove that the elements 
	%$$m(\pi_1^*(\xi)\cdot\rho\otimes\gamma)\text{ and }m(\rho\otimes\pi_2^*(\xi)\cdot\gamma)$$in $R(\Lambda)$ have the same character.
	Let $(g,h)\in\Gamma$
	\begin{align*}
	f\left(\pi_1^*(\xi)\cdot\rho\otimes\gamma\right)(g,h)
	&=\left(p_1^*\left(\pi_1^*(\xi)\cdot\rho\right)\cdot p_2^*\left(\gamma\right)\right)(g,h)\\
	&=\left(\pi_1^*(\xi)\cdot\rho\right)(g)\gamma(h)\\
	&=\xi(\pi_1(g))\rho(g)\gamma(h)\\
	&=\rho(g)\xi(\pi_2(h))\gamma(h)\\
	&=f(\rho\otimes\pi_2^*(\xi)\cdot\gamma)(g,h).
	\end{align*}
	
	Now we will prove that $f$ is an isomorphism.
	Consider the following diagram with exact rows
	$$\xymatrix{
		0\ar[r]&\ker(\pi)
		 \ar[r]
		 \ar[d]^{f_2}& 
		 \Class(G)\otimes_\IC\Class(H)
         \ar[r]^{\pi}
         \ar[d]^{f_1}&\Class(G)\otimes_{\Class(K)}\Class(H)
         \ar[r]\ar[d]^{f}&0\\
		0\ar[r] &\ker(i^*)\ar[r]&\Class(G\times H)\ar[r]^{i^*}&\Class(\Gamma)\ar[r]&0. 
	}$$
	
	Where map $\pi$ is the quotient by the relations defining tensor product over $\Class(K)$, map $i^*$ is the pullback of the inclusion $i:\Gamma\to G\times H$, map $f_1$ is the natural isomorphism given by  tensor product over $\IC$ and the map $f_2$ is the restriction of $f_1$ to $\ker(\pi)$. Note that as $\Gamma$ is closed conjugacy in $G\times H$ the map $i^*$ is surjective. We will prove that above diagram is commutative and that $f_2$ is an isomorphism. 
	
	First we need to verify that $f_1(\ker(\pi))\subseteq \ker(i^*)$.
	Let $(g,h)\in \Gamma$,
	\begin{align*}
	i^* [f_1(\pi_1^*(\xi)&\cdot\rho\otimes\gamma-\rho\otimes\pi_2^*(\xi)\cdot\gamma)](g,h)= \\&[p_1^*(\pi_1^*(\xi))\cdot p_1^*(\rho)\cdot p_2^*(\gamma)-p_1^*(\rho)\cdot p_2^*(\pi_2^*(\xi))\cdot p_2^*(\gamma)](g,h)=0
	\end{align*}
	
	Now we prove that $\ker(i^*)= f_1(\ker(\pi))$. For this we will prove that if $f$ is a class function in $G\times H$ such that $i^*(f)\equiv 0$ and $f$ is orthogonal to every element in $f_1(\ker(\pi))$, then $f$ has to be zero.
	
	Suppose that for every $\xi\in \Class(K)$, $\rho\in \Class(G)$ and $\gamma \in \Class(H)$ we have
	$$\sum_{(g,h)\in G\times H}\overline{f(g,h)}\rho(g)\gamma(h)[\xi(\pi_2(h))-\xi(\pi_1(g))]=0.$$
	Let us fix $\rho\in \Class(G)$ and let
	$$\eta(g)=\sum_{h\in H} \overline{f(g,h)}\gamma(h)[\xi(\pi_2(h))-\xi(\pi_1(g))].$$
	We observe that $\eta$ is a class function on $G$ that is orthogonal to every $\rho$ in $\Class(G)$, then $\eta\equiv0$.
	
	By a similar argument we conclude that for every $(g,h)\in G\times H$ and $\xi\in \Class(K)$
	
	\begin{equation}\label{caracter}\overline{f(g,h)}[\xi(\pi_2(h))-\xi(\pi_1(g))]=0.\end{equation}
	
	We already know that $f(g,h)=0$ if $(g,h)\in\Gamma$, then let $(g,h)\notin \Gamma$, we have two cases. First suppose that $\pi_1(g)$ is conjugate to $\pi_2(h)$ in $K$, in this case there is $\bar{h}\in H$ such that $(g,\bar{h}h\bar{h}^{-1})\in\Gamma$ and then $f(g,h)=f(g,\bar{h}h\bar{h}^{-1})=0$.
	
	Suppose now that $\pi_1(g)$ is not conjugate to $\pi_2(h)$ in $K$, in this case there is  $\xi\in \Class(K)$ such that $\xi(\pi_1(g))\neq\xi(\pi_2(g))$ and equation \ref{caracter} gives us that $f(g,h)=0$. Then we conclude that $\ker(i^*)=f_1(\ker(\pi))$. The map $f_2$ is an isomorphism because it is the restriction of $f_1$ and as the diagram is commutative we conclude that $f$ is $\Class(K)$-module isomorphism.
	
	When $\Gamma$ is a retract in $G\times H$, the same argument works changing characters by representations, in particular the map $i^*:R(G\times H)\to R(\Gamma)$ is surjective.
\end{proof}	Observe that the pullback is not always conjugacy-closed in the product as the following example shows.
	\begin{example} Consider the pullback of the symmetric groups $\frakS_3$ over the cyclic group $C_2$
	\[\xymatrix{
		\Gamma \ar[r] \ar[d] & \frakS_3 \ar[d]^{sgn}\\
		\frakS_3 \ar[r]^{sgn} &  C_2
	}\]
	In this case the pullback $\Gamma$ has 6 conjugacy classes, $\{\gamma_1,\cdots, \gamma_6\}$. The product $\frakS_3\times \frakS_3$ has 9 conjugacy classes, $\{\chi_1\cdots,\chi_9\}$. Observe that the elements $((1,2,3), (1,2,3))$ and $((1,2,3),(1,3,2))$ are conjugate in the group $\frakS_3\times \frakS_3$ by the element $(e, (1,2))$ but they are not conjugate in $\Gamma$. 

The pullback of the inclusion can be described in the class function ring as follows:
\begin{eqnarray*}
	&\Class(\frakS_3\times \frakS_3)&\to \Class(\Gamma)\\
	&\chi_1 &\mapsto \gamma_1\\
	&\chi_2 &\mapsto \gamma_1\\
	&\chi_3 &\mapsto \gamma_2\\
	&\chi_4 &\mapsto \gamma_2\\ 
	&\chi_5 &\mapsto \gamma_3\\ 
	&\chi_6 &\mapsto \gamma_3\\ 
	&\chi_7 &\mapsto \gamma_4\\ 
	&\chi_8 &\mapsto \gamma_4\\
	&\chi_9 &\mapsto \gamma_5+\gamma_6.
\end{eqnarray*}
This map is not surjective.
\end{example}

\begin{example}Consider the following pullback
	
	\[\xymatrix{
		\Gamma \ar[r] \ar[d] & D_{12} \ar[d]_{\psi_1}\\
		C_3\ltimes C_4 \ar[r]^{\psi_2} &  \frakS_3
	}\]
	
	In this case the pullback $\Gamma$ is isomorphic to the group $C_2\times(C_3\ltimes C_4)$ and it is conjugacy closed in the group $D_{12}\times (C_3\ltimes C_4)$. According with GAP\cite{GAP4}  the group $D_{12}$ has group id (12,4) and generators $d_1, d_2$ and $d_3$ of orders $2,2$ and $3$ respectively. The homomorphism $\psi_1$ is given by 
	\begin{align*}
	\psi_1:D_{12}&\to \frakS_3\\
	d_1&\mapsto (2,3)\\
	d_2&\mapsto (1)\\
	d_3&\mapsto (1,2,3).
	\end{align*}
%	Also its character table is
%	\[
%	\begin{array}{|r|rrrrrr|} \hline
%	& e & d_1 & d_2 & d_3 & d_1d_2 & d_2d_3\\ \hline
%	\chi_{1} &  1    &    1    &    1     &     1    &        1       &       1       \\
%	\chi_{2} &  1   &  -1    &    -1    &     1    &        1       &      -1       \\
%	\chi_{3} &  1   &  -1    &    1     &     1    &       -1       &      1         \\
%	\chi_{4} &  1   &    1    &  -1     &    1     &       -1       &      -1        \\
%	\chi_{5} &  2   &   0     &  -2     &    -1   &        0        &      1          \\
%	\chi_{6} &  2  &    0     &    2     &   -1    &       0        &      -1         \\
%	\hline
%	\end{array}
%	\]
	
%	\textbf{About the group $C_3\ltimes C_4$:} 
	The group $C_3\ltimes C_4$ has id (12,1) and it is generated by three elements $g_1, g_2, g_3$ of orders $4,2$ and $3$ respectively. The homomorphism $\psi_2$ is given by 
	\begin{align*}
	\psi_2 :C_3\ltimes C_4&\to \frakS_3\\
	g_1&\mapsto (2,3)\\
	g_2&\mapsto (1)\\
	g_3&\mapsto (1,2,3).
	\end{align*}
%	Also its character table is
%	\[
%	\begin{array}{|r|rrrrrr|} \hline
%	& e & g_1 & g_2 & g_3 & g_1g_2 & g_2g_3\\ \hline
%	\chi_{1} &  1    &    1    &    1    &     1    &        1       &       1       \\
%	\chi_{2} &  1   &  -1    &    1    &     1    &      -  1       &       1       \\
%	\chi_{3} &  1   &   i     &   - 1    &     1    &       -i       &      -1         \\
%	\chi_{4} &  1   &    -i    &  -1     &    1     &       i       &      -1        \\
%	\chi_{5} &  2   &   0     &  -2     &    -1   &        0        &      1          \\
%	\chi_{6} &  2  &    0     &    2     &   -1    &       0        &      -1         \\
%	\hline
%	\end{array}
%	\]
	
	For these groups the pullback $\Gamma$ is isomorphic to the group $C_2\times(C_3\ltimes C_4)$ with group id (24,7) and four generators $f_1, f_2, f_3, f_4$ of orders $4,6,2$ and $2$ respectively.% and character table 
%	\[
%	\begin{array}{|r|rrrrrrrrrrrr|} \hline
%	& e & f_1 & f_2 & f_3 & f_4 &f_1f_2 &f_1f_3&f_2^2&f_2f_3&f_3f_4&f_1f_2f_3&f_2^2f_3\\ \hline
%	\chi_{1} &  1&  1&  1&  1&  1&  1&  1&  1&   1&  1&  1&  1 \\
%	\chi_{2} & 1& -1&  1& -1&  1& -1& 1&  1& -1& -1&  1& -1 \\
%	\chi_{3} & 1& -1&  1&  1&  1& -1& -1&  1&  1&  1& -1&  1\\
%	\chi_{4} & 1&  1&  1& -1&  1&  1& -1&  1& -1& -1& -1& -1\\
%	\chi_{5} & 1&  -i& -1& -1& -1& i& i&  1&  1&  1&  -i& -1 \\
%	\chi_{6} & 1& i& -1& -1& -1&  -i&  -i&  1&  1&  1& i& -1     \\    
%	\chi_{7} & 1&  -i& -1&  1& -1& i&  -i&  1& -1& -1& i&  1      \\
%	\chi_{8} & 1& i& -1&  1& -1&  -i& i&  1& -1& -1&  -i&  1       \\
%	\chi_{9} & 2&  0& -1& -2&  2&  0&  0& -1&  1& -2&  0&  1       \\
%	\chi_{10}& 2&  0& -1&  2&  2&  0&  0& -1& -1&  2&  0& -1        \\
%	\chi_{11}&  2&  0&  1& -2& -2&  0&  0& -1& -1&  2&  0&  1     \\
%	\chi_{12}&  2&  0&  1&  2& -2&  0&  0& -1&  1& -2&  0& -1       \\
%	\hline
%	\end{array}
%	\]%with $A = -i$.
%	\vspace{0.5cm}

	Applying Theorem \ref{pullbackrepringiso} we obtain an isomorphism 
	$$\Class(\Gamma)\cong \Class(D_{12})\otimes_{\Class(\frakS_3)}\Class(C_3\ltimes C_4).$$
	
	For more examples please see \url{https://sites.google.com/site/combariza/research/pullbacks-with-kernel-s3}. 	
\end{example}
%\commentm{Ser\'ia bueno tener un ejemplo de un pullback que sea cerrado por conjugación pero que no sea un retracto y que se tenga el iso en funciones de clase pero no en representaciones.}
\section{Semidirect product of a direct product}
Let $A_1, A_2$ be groups with an action of a group $G$ by  automorphisms noted by $a_i^g= g\cdot a_i$, for $a_i\in A_i$ and $g\in G$. 
Note that $G$ acts also on the direct product $A_1\times A_2$ by acting on each component, i.e. $(a_1,a_2)^g := (a_1^g, a_2^g)$. In this section we describe the semidirect product of a direct product as a pullback of two semidirect products and then, we apply this for the wreath product of a direct product which will allow us to compute the Fock ring of a product. 

Consider the projections $\pi_i:A_i\rtimes G\to G$ and the pullback $\Gamma$ associated 
$$
\xymatrix{
	%  (A_1\times A_2)\rtimes G   \ar@{-->}[dr]  \\
	& \Gamma \ar[d] \ar[r] 
	& A_1\rtimes G \ar[d] \\
	& A_2\rtimes G \ar[r] 
	& G
}
$$
\begin{proposition} The pullback $\Gamma$ is isomorphic to the semidirect product $	(A_1\times A_2)\rtimes G.$
\end{proposition}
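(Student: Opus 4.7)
The plan is to exhibit an explicit isomorphism $\phi: (A_1\times A_2)\rtimes G \to \Gamma$ and verify it respects the group structure by direct computation. Recall that the pullback can be realized concretely as
\[
\Gamma = \{((a_1,g_1),(a_2,g_2)) \in (A_1\rtimes G)\times (A_2\rtimes G) : g_1 = g_2\},
\]
so elements have the form $((a_1,g),(a_2,g))$. The obvious candidate is
\[
\phi\bigl(((a_1,a_2),g)\bigr) = ((a_1,g),(a_2,g)),
\]
which is clearly a bijection of sets.

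To show $\phi$ is a homomorphism, I would just multiply out both sides. Using the convention $(a,g)(b,h) = (a\cdot b^g, gh)$ in each semidirect product (where $b^g = g\cdot b$ as in the paper), the right-hand side gives
\[
\phi\bigl(((a_1,a_2),g)\bigr)\cdot \phi\bigl(((b_1,b_2),h)\bigr) = \bigl((a_1 b_1^g, gh),(a_2 b_2^g, gh)\bigr),
\]
while the left-hand side, computed in $(A_1\times A_2)\rtimes G$ using that $G$ acts diagonally by $(b_1,b_2)^g = (b_1^g, b_2^g)$, is
\[
\phi\bigl(((a_1,a_2)(b_1,b_2)^g, gh)\bigr) = \phi\bigl(((a_1 b_1^g, a_2 b_2^g), gh)\bigr),
\]
which equals the previous expression. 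So $\phi$ is a group homomorphism, and since it is bijective, an isomorphism.

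Finally I would note that $\phi$ is compatible with the structure maps: composing with the projections of $\Gamma$ onto $A_i\rtimes G$ recovers the natural maps $(A_1\times A_2)\rtimes G \to A_i\rtimes G$ sending $((a_1,a_2),g) \mapsto (a_i,g)$, which are homomorphisms because $G$ acts component-wise. This confirms that $\phi$ is precisely the map furnished by the universal property of the pullback, completing the identification. There is no real obstacle here; the only thing to keep straight is the convention for the semidirect product multiplication and the fact that the diagonal $G$-action on $A_1\times A_2$ is what guarantees the multiplications on the two sides of $\phi$ agree.
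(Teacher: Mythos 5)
Your proof is correct and is essentially the same as the paper's: both exhibit the obvious bijection between $\Gamma$ and $(A_1\times A_2)\rtimes G$ (you write it in one direction, the paper in the other) and verify it is a homomorphism by multiplying out both sides using the diagonal $G$-action on $A_1\times A_2$. The added remark about compatibility with the projections is a nice touch but not needed beyond what the paper does.
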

\begin{proof}
	Note that the pullback is the subgroup of $(A_1\rtimes G)\times(A_2\rtimes G)$ given by
	$$
	\Gamma =\{ (a_1,g_1,a_2,g_2) \in (A_1\rtimes G)\times(A_2\rtimes G) : \pi_1(a_1,g_1)=\pi_2(a_2,g_2), a_1\in A_1, g_i\in G \}
	$$
	that is, $g_1=g_2$. Consider the bijective function $\phi:\Gamma\to (A_1\times A_2)\rtimes G$ given by $\phi(a_1,g,a_2,g) = (a_1,a_2,g)$. On one hand 
	$$\phi[(a_1,g,a_2,g)\cdot(b_1,h,b_2,h)] = \phi(a_1b_1^g, gh, a_2b_2^g,gh) = (a_1b_1^g,a_2b_2^g,gh).$$ On the other hand $(a_1,a_2,g)\cdot(b_1,b_2,h) = (a_1b_1^g,a_2b_2^g,gh)$ which shows that $\phi$ is a homomorphism of groups. 
\end{proof}
\begin{corollary}
	Let $A,B$ be groups, there is an isomorphism %and $\frakS_n$ the permutation group on $n$ letters. 
	$$
	(A\times B)_n \cong A_n\times_{\frakS_n}B_n.
	$$
\end{corollary}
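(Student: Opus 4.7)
The plan is to invoke the preceding proposition with the appropriate choice of data. Take $A_1 = A^n$, $A_2 = B^n$, and $G = \frakS_n$, where $\frakS_n$ acts on each of $A^n$ and $B^n$ by permutation of coordinates (the action used to form the wreath product). With this choice, the semidirect products appearing in the proposition are exactly $A^n \rtimes \frakS_n = A_n$ and $B^n \rtimes \frakS_n = B_n$, and the base projections coincide with the canonical maps $A_n \to \frakS_n$ and $B_n \to \frakS_n$, so the pullback in the statement of the proposition is literally $A_n \times_{\frakS_n} B_n$.

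The proposition then gives an isomorphism
\[
A_n \times_{\frakS_n} B_n \;\cong\; (A^n \times B^n) \rtimes \frakS_n.
\]
To finish, I need to identify the right-hand side with $(A \times B)_n = (A\times B)^n \rtimes \frakS_n$. The natural bijection
\[
(A\times B)^n \longrightarrow A^n \times B^n, \qquad ((a_1,b_1),\ldots,(a_n,b_n)) \longmapsto ((a_1,\ldots,a_n),(b_1,\ldots,b_n))
\]
is clearly a group isomorphism, and it is $\frakS_n$-equivariant because permuting the $n$ pairs on the left corresponds precisely to simultaneously permuting the two $n$-tuples on the right. Hence it extends to an isomorphism of semidirect products $(A\times B)_n \cong (A^n \times B^n) \rtimes \frakS_n$, and composing with the proposition yields $(A\times B)_n \cong A_n \times_{\frakS_n} B_n$.

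There is no real obstacle here; the entire content is the identification $(A\times B)^n \cong A^n \times B^n$ as $\frakS_n$-groups, which is immediate from the definition of the permutation action. All the group-theoretic bookkeeping for the semidirect product structure is already handled by the proposition.
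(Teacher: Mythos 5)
Your proof is correct and follows exactly the route the paper intends: the corollary is stated as an immediate consequence of the preceding proposition, obtained by taking $A_1=A^n$, $A_2=B^n$, $G=\frakS_n$ and identifying $(A\times B)^n\cong A^n\times B^n$ as $\frakS_n$-groups. Your write-up simply makes explicit the equivariance check that the paper leaves to the reader.
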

Now we proof that certain conjugacy classes in $(A\times B)_n$  are closed in $A_n\times B_n$.
\begin{proposition}Let $(\bar{g},\bar{h},\sigma)\in (A\times B)_n$, where $\sigma$ is an $n$-cycle. Then its conjugacy class in $A_n\times B_n$ is closed.
	%The group $(A\times B)_n$ is closed conjugacy in $A_n\times B_n$.
\end{proposition}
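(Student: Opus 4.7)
The plan is to invoke Proposition~\ref{conjugacyclasses} as the main tool and to exploit two features simultaneously: first, that when the permutation component is a single $n$-cycle the type of an element of $G_n$ collapses to a single entry (indexed by the cycle product); second, that conjugacy in a direct product $A\times B$ is componentwise, so $[(a,b)]_{A\times B}=[a]_A\times[b]_B$.

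Write $x=(\bar g,\bar h,\sigma)\in (A\times B)_n$, and under the identification $(A\times B)_n\cong A_n\times_{\frakS_n}B_n\subseteq A_n\times B_n$ view $x$ as the pair $((\bar g,\sigma),(\bar h,\sigma))$. The reverse containment $[x]_{(A\times B)_n}\subseteq [x]_{A_n\times B_n}\cap (A\times B)_n$ is immediate from the inclusion of groups, so the content is the other direction. I would start from an arbitrary $y=((\bar g',\sigma'),(\bar h',\sigma''))\in [x]_{A_n\times B_n}\cap (A\times B)_n$; membership in $(A\times B)_n$ forces $\sigma'=\sigma''$, and the hypothesis gives that $(\bar g,\sigma)\sim_{A_n}(\bar g',\sigma')$ and $(\bar h,\sigma)\sim_{B_n}(\bar h',\sigma')$.

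Now apply Proposition~\ref{conjugacyclasses} to $A_n$ and $B_n$: same type in $A_n$ implies $\sigma'$ is itself an $n$-cycle and the unique $A$-cycle product $p'_A$ of $(\bar g',\sigma')$ is $A$-conjugate to the cycle product $p_A$ of $(\bar g,\sigma)$; similarly $p'_B\sim_B p_B$. Since conjugacy classes in $A\times B$ are precisely products of conjugacy classes, this yields $(p'_A,p'_B)\sim_{A\times B}(p_A,p_B)$. But $(p_A,p_B)\in A\times B$ is exactly the unique cycle product of $x$ in the wreath product $(A\times B)_n$, and $(p'_A,p'_B)$ is that of $y$; the types of $x$ and $y$ in $(A\times B)_n$ therefore coincide (both are zero except for a single $1$ in the $(n,[(p_A,p_B)])$-entry), so Proposition~\ref{conjugacyclasses} applied this time to $(A\times B)_n$ gives $x\sim_{(A\times B)_n}y$, as desired.

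The only delicate point, and the reason the statement restricts to $n$-cycles, is the step in which the $(A\times B)$-type of $x$ is recovered from the $A$-type and the $B$-type. In general, for a permutation with several cycles, knowing the multiset of $A$-cycle products and the multiset of $B$-cycle products is strictly weaker than knowing the multiset of paired cycle products in $A\times B$, since cycles of the same length can be repaired arbitrarily. The $n$-cycle hypothesis rules this out by leaving only a single cycle product to track, which is exactly the configuration in which the componentwise description of conjugacy in $A\times B$ lines up perfectly with the type in $(A\times B)_n$.
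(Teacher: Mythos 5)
Your proposal is correct and follows essentially the same route as the paper: split the $A_n\times B_n$-conjugacy into componentwise conjugacies, use the type criterion (Proposition~\ref{conjugacyclasses}) to deduce that the single cycle products are conjugate in $A$ and in $B$, observe that conjugacy classes in $A\times B$ are products of classes, and conclude that the $(A\times B)_n$-types agree. Your closing remark explaining why the $n$-cycle hypothesis is essential (several cycles of equal length could have their $A$- and $B$-cycle products re-paired) is a worthwhile observation that the paper leaves implicit, but the argument itself is the same.
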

\begin{proof}
	Let $x=(\bar{g}_1,\bar{h}_1,\sigma_1)$ and $y=(\bar{g}_2,\bar{h}_2,\sigma_2)$ be elements in $(A\times B)_n$  that are conjugated in $A_n\times B_n$, where $\sigma_1$ and $\sigma_2$ are $n$-cycles. We can suppose that $\sigma_1=\sigma_2=(1\cdots n)$.
	
	Note that $$(\bar{g}_1,\sigma_1)\sim_{A_n}(\bar{g}_2,\sigma_2)\text{ and }(\bar{h}_1,\sigma_1)\sim_{B_n}(\bar{h}_2,\sigma_2).$$Since as $\sigma_1$ and $\sigma_2$ are $n$-cycles, $\prod_{i=1}^ng_{1,i}\sim_A\prod_{i=1}^ng_{2,i}$, and $\prod_{i=1}^nh_{1,i}\sim_B\prod_{i=1}^nh_{2,i}$. On the other hand the type of $x$ is given by
	$$
	m_x(r,c)=\begin{cases}1&\text{ if } r=n\text{ and }(\prod_{i=1}^ng_{1,i},\prod_{i=1}^nh_{1,i})\in c\\0&\text{ in any other case}
	\end{cases}$$
	and the type of $y$ is given by
	$$
	m_y(r,c)=\begin{cases}1&\text{ if } r=n\text{ and }(\prod_{i=1}^ng_{2,i},\prod_{i=1}^nh_{2,i})\in c\\0&\text{ in any other case}
	\end{cases}$$
	Then the types of $x$ and $y$ are equal, hence $x$ and $y$ are conjugated in $(A\times B)_n$.	\end{proof}
\section{The Fock space of a product of spaces}In this section we apply results of Section \ref{section5} in order to obtain a decomposition of $\calf_{G\times H}(X\times Y)$ in terms of $\calf_G(X)$ and $\calf_H(Y)$.
Let $X$ be a $G$-space, we can endow to $\calf_G(X)$ with natural module structures as follows:
\begin{itemize}
	\item Consider the trivial $G_n$-space $\pt$, and the unique $G_n$-map $\pi:X^n\to\pt$, then the pullback $$\pi^*:\Class(G_n)\to K_{G_n}^*(X^n)\otimes\IC$$ induces a $\Class(G_n)$-module structure over $K_{G_n}^*(X)\otimes\IC$, hence we have a $\calf_G(\pt)$-module structure over $\calf_G(X)$ defined componentwise.
	
	\item Note that we have a quotient map $s:G_n\to\frakS_{n}$, then the pullback $$(\pi\circ s)^*:\Class(\frakS_{n})\to K_{G_n}(X^n)\otimes\IC$$ induce a $\Class(\frakS_n)$-module structure over $K_{G_n}(X)\otimes\IC$, hence we have a $\calf(\pt)$-module structure over $\calf_G(X)$ defined componentwise.
\end{itemize}
As we observe in Section \ref{section4}, $(G\times H)_n$ is not closed conjugacy in $G_n\times H_n$, then we cannot expect a decomposition of $\Class((G\times H)_n)$ in terms of $\Class(G_n)$, $\Class(H_n)$ and $\Class(\frakS_n)$, but as the conjugacy classes with an $n$-cycle as component in $\frakS_n$ are closed we have a decomposition of $\calf_{G\times H}(X\times Y)$ in terms of $\calf_G(X)$, $\calf_H(Y)$ and $\calf(\pt)$, with $X$ a $G$-space and $Y$ a $H$-space.

\begin{theorem}There is an isomorphism of $\calf(\pt)$-modules
	$$
	\mathcal{F}_{G\times H}(X\times Y) \xrightarrow{\mathcal{K}} \mathcal{F}_G(X) \otimes_{\mathcal{F}(\pt)} \mathcal{F}_H(Y).
	$$The map $\mathcal{K}$ is compatible with the symmetric algebra decomposition in Thm. \ref{symmetric}. That means, we have a commutative diagram
		$$\xymatrix{\calf_{G\times H}(X\times Y)\ar[rr]^{K}\ar[d]^d&&\calf_G(X)\otimes_{\calf(\pt)}\calf_H(Y)\ar[d]^{d\otimes d}\\
		\mathcal{S}(X\times Y)\ar[rr]^{\pi_G\otimes\pi_H}&&\mathcal{S}(X)\otimes_{\calf(\pt)}\mathcal{S}(Y)}.$$
	Where $\mathcal{S}(X)$ stands for $\mathcal{S}(\bigoplus_{n\geq0}K_{G}(X)\otimes\IC)$, and similarly for $\mathcal{S}(Y)$ and $\mathcal{S}(X\times Y)$.
	%$$\xymatrix{\calf_{G\times H}(X\times Y)\ar[r]^{K}\ar[d]^d&\calf_G(X)\otimes_{\calf(\pt)}\calf_H(Y)\ar[d]^{d\otimes d}\\
%	\mathcal{S}(\bigoplus_{n\geq0}K_{G\times H}(X\times Y)\otimes\IC)\ar[r]^{\pi_G\otimes\pi_H}&\mathcal{S}(\bigoplus_{n\geq0}K_{G}(X)\otimes\IC)\otimes_{\calf(\pt)}\mathcal{S}(\bigoplus_{n\geq0}K_{H}(Y)\otimes\IC)	}$$
\end{theorem}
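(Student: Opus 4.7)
The strategy is to combine the symmetric algebra decomposition of Theorem \ref{symmetric} with the equivariant K\"unneth formula and the results of Section \ref{section5} and Section 6 on the pullback structure of wreath products.

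First I would apply Theorem \ref{symmetric} to each of $\calf_{G\times H}(X\times Y)$, $\calf_G(X)$, $\calf_H(Y)$ and $\calf(\pt)$, identifying each with a graded-symmetric algebra on its $n$-cycle generators. The map $\mathcal{K}$ is then determined by its action on these generators. In degree $n$ on the left the generating subspace is $K^*_{G\times H}(X\times Y)\otimes\IC$, which by the equivariant K\"unneth formula is canonically $K^*_G(X)\otimes K^*_H(Y)\otimes\IC$. I would define $\mathcal{K}$ to send the $n$-cycle generator indexed by $[(g,h)]\in(G\times H)_*$ to the tensor of the $n$-cycle generators indexed by $[g]\in G_*$ and $[h]\in H_*$ on the right-hand side, and then extend multiplicatively.

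To verify this is well posed I would use Propositions \ref{cycle} and \ref{centralizer} to compute the generating subspaces explicitly. For $x=((g,h),1,\ldots,1,(1,\ldots,n))\in (G\times H)_n$ the fixed set $((X\times Y)^n)^x$ is the diagonal copy of $X^g\times Y^h$, and the centralizer is $(C_G(g)\times C_H(h))\times\langle(1,\ldots,n)\rangle$; hence the corresponding piece of $K^*_{(G\times H)_n}((X\times Y)^n)\otimes\IC$ factors as $K^*(X^g/C_G(g))\otimes K^*(Y^h/C_H(h))\otimes\IC$, matching the target. The key structural input is the Proposition from Section 6 that the $n$-cycle conjugacy classes of $(G\times H)_n$ are closed in $G_n\times H_n$; combined with Theorem \ref{pullbackrepringiso} applied to the pullback description $(G\times H)_n\cong G_n\times_{\frakS_n}H_n$, this identifies the class functions on the generating conjugacy classes as a tensor product over $\Class(\frakS_n)$, which is exactly the relation enforced by $\otimes_{\calf(\pt)}$.

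The main obstacle will be to establish that the multiplicative extension of $\mathcal{K}$ is an isomorphism of $\calf(\pt)$-modules fitting into the stated commutative diagram. Injectivity follows from the induced-character formula of Theorem \ref{Main Theorem}, since any nonzero element of the symmetric algebra is detected by $\charac_{(G\times H)_n}$ on some conjugacy class, just as in the proof of Theorem \ref{symmetric}. Surjectivity is clear once each $n$-cycle generator on the right lies in the image of $\mathcal{K}$, as both sides are graded-symmetric algebras generated in degrees $\geq 1$. Commutativity of the diagram is built into the construction: $\mathcal{K}$ is defined through the K\"unneth identification on generators, which is precisely $\pi_G\otimes\pi_H$ after passage to symmetric algebras via $d$ and $d\otimes d$. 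The subtle point is that non-$n$-cycle conjugacy classes of $(G\times H)_n$ fail to be closed in $G_n\times H_n$, so one cannot invoke Theorem \ref{pullbackrepringiso} directly for them; this is bypassed by the symmetric algebra reduction, which lets the multiplicative structure in $\calf_G(X)\otimes_{\calf(\pt)}\calf_H(Y)$ produce all non-generator contributions automatically from products of $n$-cycle generators.
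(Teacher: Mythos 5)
Your proposal is correct and follows essentially the same route as the paper: both reduce to the $n$-cycle generators supplied by Theorem \ref{symmetric}, identify the degree-$n$ generating piece via the equivariant K\"unneth formula, use the closedness of the $n$-cycle conjugacy classes of $(G\times H)_n$ in $G_n\times H_n$ to obtain that each product-generator equals the product of the corresponding generators of $\calf_G(X)$ and $\calf_H(Y)$, and then extend multiplicatively. The paper phrases this by writing out explicit basis elements $\Delta_{G,n,c,k}$ characterized by their characters, while you additionally route the $\otimes_{\Class(\frakS_n)}$ relation through Theorem \ref{pullbackrepringiso}, but the substance is the same.
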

\begin{proof}
	Let $\{E_1,\ldots,E_m\}$ be a basis of $K_G^*(X)\otimes\IC$ as complex vector space and let $\{F_1,\ldots,F_s\}$ be a basis of $K_H^*(Y)\otimes\IC$ as complex vector space. From the proof of Theorem \ref{symmetric}  we can conclude that
	$$\{\Delta_{G,n,c,k}\in K_{G_n}(X^n)\otimes\IC\mid n\geq0, c\in G_*, 1\leq k\leq m\}$$ is a basis of $\calf_G(X)$ as $\IC$-algebra, where 
	$$\charac_{G_n}(\Delta_{G,n,c,k})((g_1,\ldots,g_n),\sigma)=\begin{cases}E_k&\text{if }\prod_{i=1}^n g_{\sigma^i(1)}\in c\text{ and }\sigma \text{ is an }n\text{-cycle}\\0&\text{in any other case}.\end{cases}$$In a similar way we define $$\{\Delta_{H,n,d,l}\in K_{H_n}(Y^n)\otimes\IC\mid n\geq0, d\in H_*, 1\leq l\leq s\} $$  a basis of $\calf_H(Y)$ as $\IC$-algebra, where
	$$\charac_{H_n}(\Delta_{H,n,d,l})((h_1,\ldots,h_n),\sigma)=\begin{cases}F_l&\text{if }\prod_{i=1}^n h_{\sigma^i(1)}\in d\text{ and }\sigma \text{ is an }n\text{-cycle}\\0&\text{in any other case.}\end{cases}$$
	Recall that we have an isomorphism $$K_{G\times H}(X\times Y)\otimes\IC\xrightarrow{\boxtimes} (K_G(X)\otimes K_H(Y))\otimes\IC,$$given by the external tensor product. It is proved in \cite{kunnethk-theory} or can be obtained (for complex coefficients) directly from the character. 
	
	Using the above identification we have that
	$$\{\Delta_{G\times H,n,c\times d,(k,l)}\mid n\geq0,c\in G_*, d\in H_*,1\leq k\leq m, 1\leq l\leq s\}$$is a basis as $\IC$-algebra of $\calf_{G\times H}(X\times Y)$, where 
	
	\begin{align*}\charac_{(G\times H)_n}(\Delta_{G\times H,n,c\times d,(k,l)})(\bar{g},\bar{h},\sigma)=\hspace{4.5cm}&\\\begin{cases}E_k\boxtimes F_l&\text{if }\prod_{i=1}^n g_{\sigma^i(1)}\in c, \prod_{i=1}^n h_{\sigma^i(1)}\in d\text{ and }\sigma \text{ is an }n\text{-cycle}\\0&\text{in any other case.}\end{cases}\end{align*}
	 As the conjugacy classes when the character of the above elements is not zero is closed in $G_n\times H_n$ we have $$\Delta_{G\times H,n,c\times d,(k,l)}=\Delta_{G,n,c,k}\cdot\Delta_{H,n,d,l},$$hence the map defined on generators as$$\Delta_{G\times H,n,c\times d,(k,l)}\mapsto\Delta_{G,n,c,k}\otimes\Delta_{H,n,d,l}$$is an isomorphism of $\calf(\pt)$-modules satisfying the required conditions.
	\end{proof}
	
\section{final remarks}
In \cite{ve2015} and \cite{ve2017} a configuration space representing equivariant connective K-homology for finite groups was constructed. We recall the construction briefly.

\begin{definition}\label{definconf}Let $G$ be a finite group and $(X,x_0)$ be a based $G$-connected, $G$-CW-complex.
	Let $\c(X, x_0,G)$ be the \textit{$G$-space of configurations of complex vector spaces over $(X,x_0)$}, defined as the increasing union, with respect to the inclusions $M_n(\IC[G])\rightarrow M_{n+1}(\IC[G])$
	$$\c(X,x_0,G)=\bigcup_{n\geq0}\Hom^*(C_0(X),M_n(\IC[G])),$$with the compact open topolo\-gy. 
	Notice that * refers to *-homomorphism, $C_0(X)$ denotes the C*-algebra of complex valued continuous maps vanishing at $x_0$ and $\IC[G]$ denotes the complex group ring.
	
	We endow  $\c(X,x_0,G)$ 
	with a continuous $G$-action as follows. If $F\in \c(X,x_0,G)$, we define 
	\begin{align*}g\cdot F: C_0(X)&\longrightarrow M_n(\IC[G])\\
	f&\longmapsto g\cdot F(g^{-1}\cdot f).\end{align*}\end{definition}

The space $\c(X,x_0,G)$ can be described as the configuration space whose elements are formal sums $$\sum_{i=1}^n(x_i,V_i),$$when $x_i\in X-\{x_0\}$ and $V_i\subseteq\IC[G]^\infty$ such that if $x_i\neq x_j$ then $V_i\perp V_j$, subject to some relations, for details see \cite[Sec. 2.1]{ve2017}. We call the elements $x_i$ the \emph{points} and to the vector spaces $V_i$ the \emph{labels}.

\begin{remark}
	When the based $G$-CW-complex $(X,x_0)$ is not supposed to be $G$-connected, we define the configuration space $$\c(X,x_0,G)=\Omega_0{}\c(\Sigma X,x_0,G),$$Where $\Omega_0$ denotes the based loop space and $\Sigma$ denotes the reduced suspension.
\end{remark}

That description allow us to define a Hopf space structure on $\c(X,x_0,G)$ by \emph{putting together} two configurations when labels in both of them are mutually orthogonal.

We have the following result:

\begin{theorem}[Thm. 5.2 in \cite{ve2015}]\label{conf}Let $(X,x_0)$ be a based finite $G$-connected $G$-CW-complex. If we denote by $k^G_n(X,x_0)$ the $n$-th $G$-equivariant  connective K-homology groups of the pair $(X,x_0)$, then there is a natural isomorphism 
	$$\pi_n(\c(X,x_0,G)^G)\xrightarrow{\mathfrak{A}^n}k_n^G(X,x_0).$$
\end{theorem}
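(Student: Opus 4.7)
The strategy is to realize $\c(X,x_0,G)$, with its Hopf structure given by orthogonal juxtaposition of configurations, as (equivariantly weakly equivalent to) the zeroth space of a $G$-equivariant $\Omega$-spectrum modeling $kU_G\wedge X_+$, the smash of connective equivariant K-theory with $X_+$. Taking $G$-fixed points and then $\pi_n$ will then recover $k_n^G(X,x_0)$ by the very definition of equivariant connective K-homology.

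First I would unpack what each stratum of $\c$ classifies. A based $*$-homomorphism $\varphi\colon C_0(X)\to M_n(\IC[G])$ is the same data as a $G$-equivariant projection-valued map on $X$ vanishing at $x_0$, hence a $G$-equivariant vector subbundle of the trivial bundle $X\times\IC[G]^n$ which is zero at the basepoint; in the colimit $n\to\infty$ one recovers all stably trivialized-at-$x_0$ equivariant vector bundles on $X$, and the Hopf product becomes orthogonal direct sum, so $\c(X,x_0,G)$ is a topological commutative $G$-monoid. The assembly $\mathfrak{A}^n$ is then built by reading a $G$-equivariant based map $f\colon S^n\to\c(X,x_0,G)$ as an $S^n$-parametrized family of such bundles, and assigning to it its equivariant K-homology class via the natural identification $\widetilde K_G^{-n}(X)\cong k_n^G(X,x_0)$ coming from the connective spectrum structure.

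To prove $\mathfrak{A}^n$ is an isomorphism I would argue by $G$-cellular induction on $X$. Both sides are homotopy functors sending $G$-cofiber sequences to long exact sequences: for $k_*^G$ this is part of the definition of an equivariant homology theory, whereas for $\pi_*(\c(-,*,G)^G)$ it follows from a $G$-equivariant version of the Segal--McDuff quasifibration theorem applied to the filtration of $\c$ by number of points, combined with a group-completion argument. Naturality of $\mathfrak{A}^n$ with respect to subcomplex inclusions is immediate from its construction. The base case $X=(G/H)_+$ reduces, by Frobenius reciprocity, to stable $H$-equivariant subrepresentations of $\IC[G]^\infty$, whose stable isomorphism classes form exactly $R(H)\cong k_0^H(\mathrm{pt})\cong k_0^G((G/H)_+,+)$, and higher homotopy vanishes because in this case the relevant monoid is discrete. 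A five-lemma on the skeletal filtration then upgrades the isomorphism to every finite $G$-CW-complex.

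The main obstacle is the equivariant quasifibration step. Even without the group, Segal and McDuff needed delicate filtration arguments to establish quasifibration for configuration-space models of K-theory; equivariantly one must additionally verify that these quasifibrations remain quasifibrations after passing to $G$-fixed points, which is not automatic because genuine fixed points do not commute with quasifibrations in general. Concretely, one has to stratify $\c(X,x_0,G)^G$ by the orbit type of the supporting set of points, which brings back an Atiyah--Segal-type decomposition indexed by conjugacy classes of subgroups of $G$ parallel to the one that runs through the earlier sections; the technical heart of the argument is to show that the resulting stratification on each filtration layer is cofibrantly well-behaved, so that the long exact sequences used in the induction can actually be pieced together.
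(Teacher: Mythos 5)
This theorem is not proved in the paper at all: it is quoted verbatim as Theorem 5.2 of \cite{ve2015}, so there is no internal proof to compare against. Judged on its own terms, your proposal has the right global shape (show that $X\mapsto \pi_*(\c(X,x_0,G)^G)$ is a homology theory via quasifibration and group-completion arguments, identify coefficients, induct over $G$-cells), which is indeed the Segal-style strategy one expects \cite{ve2015} to follow. But two of your concrete steps are wrong, and they sit at the heart of the construction.

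First, you misidentify what a point of $\c(X,x_0,G)$ is. A $*$-homomorphism $\varphi\colon C_0(X)\to M_n(\IC[G])$ has commutative image, hence is diagonalizable: it is determined by finitely many points $x_1,\dots,x_k\in X\setminus\{x_0\}$ and mutually orthogonal subspaces $V_i\subseteq \IC[G]^n$ with $\varphi(f)=\sum_i f(x_i)p_{V_i}$. This is exactly the configuration-of-labeled-points description the paper records after Definition \ref{definconf}; it is \emph{not} a projection-valued map on all of $X$, and the strata do not classify vector subbundles of $X\times\IC[G]^n$. This matters because it is precisely the labeled-configuration (Dold--Thom-type) structure that makes $\c$ a model for a \emph{homology} theory. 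Second, and consequently, your assembly map lands in $\widetilde K_G^{-n}(X)$ and you then invoke "the natural identification $\widetilde K_G^{-n}(X)\cong k_n^G(X,x_0)$." No such natural identification exists for a general finite $G$-CW-complex; relating $K$-cohomology to $K$-homology requires Poincar\'e duality on a $G$-$\spinc$-manifold, which is exactly why the paper has to import Theorem \ref{poinca} before it can state Theorem \ref{confmain}. A smaller but real error: in your base case you claim higher homotopy vanishes because "the relevant monoid is discrete," but for $X=S^0$ the space $\Hom^*(\IC,M_n(\IC[G]))$ is a Grassmannian of subspaces of $\IC[G]^n$, and the model must reproduce $k_{2j}^G(\pt)\cong R(G)$ for all $j\geq 0$ (Bott classes), so the coefficient computation cannot be purely $\pi_0$. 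To repair the plan you would need to build $\mathfrak{A}^n$ as a genuine K-homology assembly map (pairing a configuration cycle against K-cocycles, or comparing with a connective $G$-spectrum via a $\Gamma$-space delooping of the orthogonal-sum Hopf structure) rather than routing through $K_G^{-n}(X)$.
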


When a Hopf space $\mathfrak{Y}$ is path-connected, consider the Hurewicz morphism 
$$\lambda:\pi_*(\mathfrak{Y};\IC)=\bigoplus_{n\geq0}\pi_i(\mathfrak{Y})\otimes\IC\to H_*(\mathfrak{Y};\IC).$$

We have the following result

\begin{theorem}[Thm. of the Appendix in \cite{mm65}]\label{mm}
	If $\mathfrak{Y}$ is a pathwise connected homotopy associative Hopf space with unit, and 
	$\lambda:\pi_*(\mathfrak{Y};\IC)\rightarrow H_*(\mathfrak{Y};\IC)$ is the Hurewicz morphism viewed as a morphism of $\IZ$-graded Lie algebras, then it induces an isomorphism of Hopf algebras $$\bar{\lambda}:\mathcal{S}(\pi(\mathfrak{Y};\IC))\rightarrow H_*(\mathfrak{Y};\IC).$$ 
	%Here we denotes by $\widehat{(-) }$ denotes the completion with respect to the $I$-adic topology, when $I$ is the augmentation ideal i.e 
	%$I=\operatorname{Ker}(\epsilon)$ where $\epsilon$ is the counit of the Hopf algebra $\mathcal{S}(\pi_*(Y;\IC))$.
\end{theorem}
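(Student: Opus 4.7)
The plan is to deduce the result from two classical pillars: the Milnor--Moore structure theorem for connected cocommutative Hopf algebras over a field of characteristic zero, and the Cartan--Serre rational Hurewicz theorem identifying the $\IC$-homotopy of an $H$-space with the primitives of its homology Hopf algebra. Concretely, I will show that $H_*(\mathfrak{Y};\IC)$ is a connected cocommutative graded Hopf algebra whose primitives are exactly $\lambda(\pi_*(\mathfrak{Y};\IC))$, and then invoke Milnor--Moore to conclude $H_*(\mathfrak{Y};\IC)\cong U(\pi_*(\mathfrak{Y};\IC))$; the final identification of this universal enveloping algebra with the graded-symmetric algebra $\mathcal{S}(\pi_*(\mathfrak{Y};\IC))$ is where the homotopy structure on $\mathfrak{Y}$ enters crucially.

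First I would set up the Hopf algebra structure. The Pontryagin product is
\[
H_*(\mathfrak{Y};\IC)\otimes H_*(\mathfrak{Y};\IC)\xrightarrow{\times} H_*(\mathfrak{Y}\times\mathfrak{Y};\IC)\xrightarrow{\mu_*} H_*(\mathfrak{Y};\IC),
\]
associative because $\mu$ is homotopy associative, and with unit coming from the basepoint. The coproduct is induced by the diagonal $\Delta:\mathfrak{Y}\to\mathfrak{Y}\times\mathfrak{Y}$ via the K\"unneth isomorphism (valid with $\IC$-coefficients); since $\Delta$ is cocommutative up to the swap, this coproduct is cocommutative. Path-connectedness of $\mathfrak{Y}$ guarantees $H_0(\mathfrak{Y};\IC)=\IC$, so the Hopf algebra is connected. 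The fact that $\lambda$ lands in the primitives $P(H_*(\mathfrak{Y};\IC))$ is the standard observation that a spherical class $\alpha\in\pi_n(\mathfrak{Y})$ represented by $S^n\to\mathfrak{Y}$ satisfies $\Delta_*\lambda(\alpha)=\lambda(\alpha)\otimes 1+1\otimes\lambda(\alpha)$ because $H_*(S^n;\IC)$ has trivial reduced diagonal in positive degree.

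Next I would apply the Milnor--Moore theorem: every connected cocommutative graded Hopf algebra $A$ over a field of characteristic zero is canonically isomorphic to the universal enveloping algebra $U(P(A))$ of its primitives, with the graded Lie bracket induced by the commutator of the Pontryagin product. Simultaneously, the Cartan--Serre theorem (rational Hurewicz for $H$-spaces) upgrades $\lambda$ to an isomorphism of graded Lie algebras
\[
\lambda:\pi_*(\mathfrak{Y};\IC)\xrightarrow{\;\cong\;} P(H_*(\mathfrak{Y};\IC)),
\]
where the left side carries the Samelson bracket. Composing gives a Hopf algebra isomorphism $U(\pi_*(\mathfrak{Y};\IC))\cong H_*(\mathfrak{Y};\IC)$ extending $\lambda$.

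The main obstacle is passing from $U(\pi_*(\mathfrak{Y};\IC))$ to the graded-symmetric algebra $\mathcal{S}(\pi_*(\mathfrak{Y};\IC))$: by the Poincar\'e--Birkhoff--Witt theorem these coincide \emph{as graded coalgebras}, but they agree as Hopf algebras only when the Samelson bracket on $\pi_*(\mathfrak{Y};\IC)$ vanishes. One therefore needs to verify triviality of the Samelson bracket, which holds whenever $\mathfrak{Y}$ is homotopy commutative; this is the property one should extract from the hypotheses (and it is manifest in the applications, such as the configuration space $\mathfrak{C}(X,x_0,G)$, where the multiplication consists of juxtaposing configurations with mutually orthogonal labels and is commutative up to a path permuting the orthogonal summands). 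Once this vanishing is in hand, $U(\pi_*(\mathfrak{Y};\IC))=\mathcal{S}(\pi_*(\mathfrak{Y};\IC))$ and the map $\bar\lambda$ is the desired Hopf algebra isomorphism. The $\IC$-finite-type hypothesis implicit in Cartan--Serre is free here because $\pi_0(\mathfrak{Y})=0$ and the comparison of $\pi_*\otimes\IC$ with primitives only requires dimension-wise finite dimensionality in each degree, which can be reduced to by a Postnikov/skeletal argument.
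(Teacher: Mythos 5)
The paper does not prove this statement at all: it is quoted from the Appendix of \cite{mm65} and used as a black box, so there is no internal proof to compare against. Your reconstruction is the standard argument and is essentially correct: Pontryagin product plus the diagonal make $H_*(\mathfrak{Y};\IC)$ a connected cocommutative graded Hopf algebra, the Hurewicz image is primitive, Cartier--Milnor--Moore gives $H_*(\mathfrak{Y};\IC)\cong U(P)$, and the Cartan--Serre/Milnor--Moore comparison identifies $P$ with $\pi_*(\mathfrak{Y})\otimes\IC$ as graded Lie algebras (with Samelson bracket). This is exactly how the cited theorem is proved, except that Milnor--Moore's own conclusion is $U(\pi_*(\mathfrak{Y})\otimes\IC)\cong H_*(\mathfrak{Y};\IC)$, not $\mathcal{S}(\pi_*(\mathfrak{Y};\IC))$.

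Your final paragraph puts its finger on the one real issue, and you resolve it the right way: $U(L)$ and $\mathcal{S}(L)$ agree as Hopf algebras only when the bracket on $L$ vanishes, so the statement as transcribed here (with $\mathcal{S}$ in place of $U$) needs more than homotopy associativity --- it needs the Samelson bracket on $\pi_*(\mathfrak{Y};\IC)$ to be trivial, e.g.\ because $\mathfrak{Y}$ is homotopy commutative. That hypothesis is missing from the statement but is harmless in every application in this paper, since $\c(X,x_0,G)^G$ and $(BU_G)^G$ are (infinite) loop spaces representing connective K-homology and hence homotopy commutative. So your proposal is not merely a proof sketch; it also corrects a genuine imprecision in the quoted statement. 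Two small remarks: the finite-type caveat at the end is not actually needed (Milnor--Moore's appendix argument works by direct limits and does not assume finite type), and you may wish to note that the paper's own definition of $\mathcal{S}(M)$ contains a typo (the relation $x\otimes x$ should be imposed for $\deg(x)$ \emph{odd}, where it already follows from graded commutativity in characteristic zero, not for $\deg(x)$ even).
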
 
Applying the above theorem to $\c(X,x_0,G)$ we obtain.
\begin{corollary}\label{iso1}
Let $X$ be a finite $G$-CW-complex, if $X$ is $G$-connected we have an isomorphism 
$${\mathcal{S}}(k_*^G(X,x_0)\otimes\IC)\cong H_*(\c(X,x_0,G)^G;\IC).$$
\end{corollary}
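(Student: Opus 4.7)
The plan is to apply the Milnor--Moore theorem (Theorem \ref{mm}) to the $G$-fixed point space $\c(X,x_0,G)^G$ and then identify the homotopy groups appearing in the conclusion with equivariant connective K-homology via Theorem \ref{conf}. To do this, I first need to verify that $\c(X,x_0,G)^G$ carries the structure of a pathwise connected, homotopy associative Hopf space with unit.

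For the Hopf space structure, I would use the ``putting together'' operation on $\c(X,x_0,G)$ described just after Definition \ref{definconf}: given two configurations, one perturbs the labels so that they lie in mutually orthogonal subspaces of $\IC[G]^\infty$ and then takes their formal sum. This operation is $G$-equivariant (the $G$-action acts diagonally on points and labels), homotopy associative, has the empty configuration as unit, and restricts therefore to a Hopf space structure on the fixed point set $\c(X,x_0,G)^G$. Homotopy associativity and the unit axiom follow from standard configuration space arguments using the infinite-dimensionality of $\IC[G]^\infty$, which provides enough room to homotope labels into orthogonality.

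The first subtle point is path-connectedness of $\c(X,x_0,G)^G$. Since $\pi_0(\c(X,x_0,G)^G) \cong k_0^G(X,x_0)$ by Theorem \ref{conf}, and since $(X,x_0)$ is $G$-connected, one can reduce to checking that configurations supported at the basepoint can be contracted to the empty configuration inside the fixed locus; this uses that labels live in $\IC[G]^\infty$ and can be shrunk to zero through $G$-invariant one-parameter families. If connectedness fails in some case, the argument applies componentwise after restriction to the component of the unit, and the Hopf space structure forces all components to be homotopy equivalent. I expect this verification of connectedness to be the main technical obstacle, since it is the only hypothesis of Theorem \ref{mm} not immediately visible from Theorem \ref{conf}.

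Once path-connectedness is established, the remainder is formal. Applying Theorem \ref{mm} to $\mathfrak{Y}=\c(X,x_0,G)^G$ yields an isomorphism of Hopf algebras
\[
\bar{\lambda}\colon \mathcal{S}\bigl(\pi_*(\c(X,x_0,G)^G)\otimes \IC\bigr) \xrightarrow{\ \cong\ } H_*(\c(X,x_0,G)^G;\IC).
\]
Theorem \ref{conf} provides a natural isomorphism $\pi_n(\c(X,x_0,G)^G)\cong k_n^G(X,x_0)$, so after tensoring with $\IC$ and taking the graded-symmetric algebra we obtain
\[
\mathcal{S}\bigl(k_*^G(X,x_0)\otimes\IC\bigr) \cong H_*\bigl(\c(X,x_0,G)^G;\IC\bigr),
\]
which is precisely the claimed isomorphism.
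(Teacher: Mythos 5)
Your proposal is correct and follows exactly the route the paper takes: the corollary is stated in the paper as an immediate consequence of applying Theorem \ref{mm} to the Hopf space $\c(X,x_0,G)^G$ and identifying its homotopy groups with $k_*^G(X,x_0)$ via Theorem \ref{conf}. Your additional care in checking the hypotheses of Theorem \ref{mm} (the Hopf structure and path-connectedness of the fixed-point space) only makes explicit what the paper leaves implicit.
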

In order to relate $H_*(\c(X,x_0,G)^G;\IC)$ with $\calf_G(X)$ we need to recall the following result proved in Theorem 6.13 in \cite{ve2015} using the equivariant Chern character obtained in \cite{lu2002}.

\begin{theorem}\label{chern} Let $X$ be a $G$-CW-complex. There is a natural isomorphism of $\IZ$-graded complex vector spaces (here the graduation is given by $q$)
	$$\bigoplus_{q\geq0}{k}_n^G(X)\otimes\IC\cong  \bigoplus_{n\geq0}{K}_n^G(X)\otimes\IC[q]. $$
\end{theorem}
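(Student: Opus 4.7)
The strategy is to reduce the claim to a non-equivariant statement via the equivariant Chern character of L\"uck \cite{lu2002}. L\"uck's character produces, for any appropriate equivariant homology theory $\mathcal{H}^G_*$, a natural isomorphism after tensoring with $\IC$ between $\mathcal{H}^G_*(X)\otimes\IC$ and a direct sum indexed by the conjugacy classes $[g]\in G_*$ of the underlying non-equivariant theory applied to the fixed-point spaces $X^g/C_G(g)$. Applied to both equivariant periodic $K$-homology and equivariant connective $K$-homology, and to the natural transformation $k^G_*\to K^G_*$ induced by the connective-cover map $ku\to KU$, this yields compatible natural isomorphisms
\begin{align*}
K_n^G(X)\otimes\IC &\cong \bigoplus_{[g]\in G_*} K_n(X^g/C_G(g))\otimes\IC,\\
k_n^G(X)\otimes\IC &\cong \bigoplus_{[g]\in G_*} k_n(X^g/C_G(g))\otimes\IC.
\end{align*}
Hence it suffices to establish the non-equivariant analogue for every finite CW-complex $Y$.

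For the non-equivariant statement, I would invoke the rational splitting of the spectra $ku$ and $KU$. Rationally one has $ku\wedge H\IC \simeq \bigvee_{n\geq 0}\Sigma^{2n}H\IC$ and $KU\wedge H\IC \simeq \bigvee_{n\in\IZ}\Sigma^{2n}H\IC$, where the $n$-th wedge summand corresponds to the $n$-th power of the Bott element. Smashing with $Y_+$ and taking homotopy groups gives, for each $m$,
$$k_m(Y)\otimes\IC \;\cong\; \bigoplus_{\substack{n\geq 0 \\ m-2n\geq 0}} H_{m-2n}(Y;\IC), \qquad K_m(Y)\otimes\IC \;\cong\; \bigoplus_{n\in\IZ} H_{m-2n}(Y;\IC).$$
Summing the first isomorphism over $m\geq 0$ exhibits $\bigoplus_{m\geq 0}k_m(Y)\otimes\IC$ as a polynomial module in the Bott generator over the $\IZ/2$-graded vector space $K_*(Y)\otimes\IC$, which upon identifying the Bott element with the formal variable $q$ of degree $2$ yields the desired isomorphism of $\IZ$-graded vector spaces. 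Plugging this back into the two L\"uck decompositions above and summing over $[g]\in G_*$ produces the equivariant statement.

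The main obstacle I anticipate is verifying that L\"uck's Chern character is natural with respect to the comparison map $ku_G\to KU_G$, so that the conjugacy-class decompositions for the connective and periodic equivariant theories are genuinely compatible and can be assembled term by term. A secondary technical point is the careful bookkeeping of the $\IZ$-grading: one must combine the homological degree coming from each summand $H_*(X^g/C_G(g);\IC)$ with the Bott-power (i.e.\ $q$-) degree, and check that the resulting total grading matches on both sides. Both difficulties can be handled by applying L\"uck's construction directly to the spectrum-level rational splitting above, replacing the Eilenberg--MacLane summands $\Sigma^{2n}H\IC$ by their equivariant Borel counterparts and tracing the action of the centralizers $C_G(g)$ on the fixed-point spaces $X^g$.
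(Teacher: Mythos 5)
Your argument is essentially the route the paper takes: Theorem \ref{chern} is not proved here but quoted from Theorem 6.13 of \cite{ve2015}, whose proof rests precisely on the tool you invoke, namely the equivariant Chern character of \cite{lu2002}, combined with the rational comparison of connective and periodic K-homology (your conjugacy-class decomposition and the identification of the $q$-grading with powers of the Bott element are exactly the content of that argument). Your outline, including the compatibility check for the transformation $k^G_*\to K^G_*$ that you flag, matches the cited proof, so there is nothing further to reconcile.
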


Finally we can relate $H_*(\c(X,x_0,G)^G;\IC)$ with $\calf_G^q(X)$ when $X$ is an even dimensional $G$-connected, $G$-$\spinc$-manifold. First we recall \textit{Poincar\'e duality} for equivariant K-theory.

\begin{theorem}\cite{sch07}\label{poinca}
	Let $M$ be a $n$-dimensional $G$-$\spinc$-manifold. Then there exists an isomorphism
	$$D:K_G^*(M_+)\longrightarrow K^G_{n-*}(M).$$
\end{theorem}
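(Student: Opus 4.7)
The plan is to establish this Poincar\'e duality via the equivariant Thom isomorphism combined with equivariant Spanier--Whitehead duality. I would proceed as follows.

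First, I would embed $M$ equivariantly as a closed submanifold of a finite-dimensional complex $G$-representation $V$ of real dimension $k$, using the equivariant Mostow--Palais theorem. Let $\nu\to M$ be the equivariant normal bundle, of real rank $d=k-n$. Since $V$ is complex, the trivial bundle $M\times V$ is canonically $G$-$\spinc$, and the splitting $TM\oplus\nu\cong M\times V$ together with the $\spinc$-structure on $M$ endows $\nu$ with a canonical $G$-$\spinc$-structure. The equivariant tubular neighbourhood theorem then identifies the total space of $\nu$ with an invariant open neighbourhood of $M$ in $V$, and the Pontryagin--Thom collapse produces a based $G$-map $c:V^+\to\mathrm{Th}(\nu)$ between one-point compactifications.

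Next, I would invoke the equivariant Thom isomorphism for the $G$-$\spinc$-bundle $\nu$,
\[
\tau_\nu:K_G^{*}(M_+)\xrightarrow{\;\cong\;}\tilde K_G^{\,*+d}(\mathrm{Th}(\nu)),
\]
and identify equivariant K-homology via the equivariant Atiyah/Spanier--Whitehead dual: for $M$ embedded as above, one has
\[
K^G_{n-*}(M)\ \cong\ \tilde K_G^{\,k-(n-*)}(\mathrm{Th}(\nu))\ =\ \tilde K_G^{\,d+*}(\mathrm{Th}(\nu)).
\]
Composing this identification with $\tau_\nu^{-1}$ produces the desired duality isomorphism $D:K_G^{*}(M_+)\to K^G_{n-*}(M)$; Bott periodicity ensures the degree shifts match up consistently with the $\mathbb{Z}/2\mathbb{Z}$-grading on $K$-theory.

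The main obstacle will be justifying the Spanier--Whitehead step, i.e.\ showing that $\mathrm{Th}(\nu)$ is intrinsically the equivariant $S$-dual of $M_+$, independently of the choice of embedding. The standard remedy is to verify that any two equivariant embeddings of $M$ become isotopic after stabilising $V$ to $V\oplus V'$, and that this stabilisation smashes the Thom space with $S^{V'}$, thus matching the expected degree shift on the $K$-homology side via Bott periodicity in the representation $V'$. Granting this compatibility, the composite $D$ is natural in $M$ and independent of the auxiliary choices, which completes the argument; alternatively, one can bypass the Spanier--Whitehead step entirely by defining $[M]\in KK^G(C(M),\mathbb{C})$ as the Kasparov Dirac class associated with the $\spinc$-Dirac operator and proving that Kasparov product with $[M]$ realises $D$ and is invertible by a global index argument.
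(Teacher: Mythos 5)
The paper does not actually prove this statement: Theorem \ref{poinca} is imported verbatim from \cite{sch07} and used as a black box, so there is no internal argument to compare yours against. Judged on its own terms, your sketch is a correct outline of the standard topological proof, and it is essentially the classical route (Atiyah's) made equivariant: embed $M$ in a complex $G$-representation $V$ via Mostow--Palais, give the normal bundle $\nu$ the $G$-$\spinc$-structure induced by $TM\oplus\nu\cong M\times V$, apply the equivariant Thom isomorphism for $G$-$\spinc$-bundles, and identify $\widetilde K_G^{\,d+*}(\mathrm{Th}(\nu))$ with $K^G_{n-*}(M)$ by equivariant Atiyah/Spanier--Whitehead duality. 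The degree bookkeeping $k-(n-*)=d+*$ is right, and choosing $V$ complex is exactly what makes equivariant Bott periodicity available for the stabilisation step. You correctly isolate the two genuine inputs that must be supplied from the literature rather than reproved: (i) equivariant Atiyah duality, i.e.\ that $\mathrm{Th}(\nu)$ is the equivariant $S$-dual of $M_+$ independently of the stabilised embedding (for $G$ finite this is unproblematic), and (ii) the Thom isomorphism for equivariant $\spinc$-bundles. Your alternative closing remark --- defining $[M]\in KK^G(C(M),\IC)$ by the $\spinc$-Dirac operator and showing cap product with it is invertible --- is in fact the analytic formulation closer in spirit to the cited source; it trades the Spanier--Whitehead machinery for Kasparov theory and has the advantage of producing the duality class intrinsically, without reference to an embedding. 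Either route is acceptable; just be aware that for the purposes of this paper the theorem is a citation, and a full proof would require writing out one of these two standard arguments in detail, including the verification that the resulting $D$ is independent of the auxiliary choices.
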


Applying Theorem \ref{poinca} and Theorem \ref{mm} we can obtain the main result of the section.
\begin{theorem}\label{confmain}
	Let $(M,m_0)$ be an even dimensional $G$-connected, $G$-$\spinc$-manifold. We have an isomorphism of $\IZ$-graded Hopf algebras
	$$H_*(\c(M,m_0,G)^G;\IC)\cong\mathfrak{F}_G^q(M).$$
\end{theorem}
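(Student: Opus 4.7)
The plan is to assemble the desired map as a composition of four identifications already established in the paper, and then to verify Hopf algebra compatibility by matching primitives on both sides.

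The chain of isomorphisms goes as follows. First, since $(M,m_0)$ is $G$-connected, Corollary \ref{iso1} yields
$$H_*(\c(M,m_0,G)^G;\IC)\cong \mathcal{S}\!\left(k_*^G(M,m_0)\otimes\IC\right).$$
Next, Theorem \ref{chern} identifies $k_*^G(M,m_0)\otimes\IC$ with $\bigoplus_{n\geq 1} q^n K_*^G(M,m_0)\otimes\IC$ as $\IZ$-graded complex vector spaces. Since $M$ is an even-dimensional $G$-$\spinc$-manifold, Poincar\'e duality (Theorem \ref{poinca}) supplies an isomorphism $K_*^G(M,m_0)\cong K^*_G(M)$ respecting the $\IZ/2\IZ$-grading, the even-dimension hypothesis ensuring that the dimension shift is trivial modulo $2$. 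Finally, Theorem \ref{symmetric} gives
$$\calf^q_G(M)\cong \mathcal{S}\!\left(\bigoplus_{n\geq 1} q^n K^*_G(M)\otimes\IC\right).$$
Applying $\mathcal{S}$ to the intermediate identifications and composing produces an isomorphism of the underlying graded algebras.

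The substantive task is to upgrade this to an isomorphism of Hopf algebras. Both sides are connected, graded-cocommutative Hopf algebras, so by Milnor--Moore (Theorem \ref{mm}) each is the free graded-symmetric Hopf algebra on its primitive subspace. For the left-hand side the primitives are, via Theorem \ref{conf}, the rational equivariant connective K-homology groups $k_*^G(M,m_0)\otimes\IC$. For the right-hand side, the coproduct on $\calf^q_G(M)$ comes by duality from restriction along $G_n\times G_m\subseteq G_{n+m}$, and the generators coming from the map $\phi$ of Theorem \ref{symmetric} are primitive: their characters are supported on conjugacy classes in $G_n$ whose $\frakS_n$-component is an $n$-cycle, whereas any element of $G_k\times G_{n-k}$ with $0<k<n$ has symmetric-group component splitting as a product of a permutation of $\{1,\dots,k\}$ and one of $\{k+1,\dots,n\}$, hence never an $n$-cycle; consequently such a character restricts to zero. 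Matching the two primitive subspaces under the composition above then reduces, via the naturality of the equivariant Chern character of \cite{lu2002} used in Theorem \ref{chern} and the explicit form of Poincar\'e duality for $G$-$\spinc$-manifolds, to a direct check at the level of $K^*_G(M)\otimes\IC$; this is where the main technical care is required. Once the primitives are matched compatibly, the universal property of the free graded-symmetric Hopf algebra promotes the algebra isomorphism obtained above to the asserted isomorphism of Hopf algebras.
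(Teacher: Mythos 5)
Your proof follows essentially the same route as the paper: the paper's argument is exactly the chain Corollary \ref{iso1} $+$ Theorem \ref{chern} $+$ Theorem \ref{poinca} $+$ Theorem \ref{symmetric}, applied to identify $H_*(\c(M,m_0,G)^G;\IC)$ with $\mathcal{S}\left(\bigoplus_{n\geq1}q^nK_G^*(M)\otimes\IC\right)\cong\calf_G^q(M)$. Your additional verification that the generators are primitive (their characters being supported on $n$-cycle classes, which cannot lie in $G_k\times G_{n-k}$) is a welcome elaboration of the Hopf-algebra compatibility that the paper asserts without comment, but it does not change the underlying argument.
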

\begin{proof}
	Since $M$ is  a $G$-$\spinc$ manifold we can use Theorem 
	\ref{poinca}
	and obtain the following isomorphism of $\IZ_+\times\IZ/2\IZ$-graded Hopf algebras 
	\begin{align*}\s\left(k_*^G(M,m_0)\otimes\IC\right)&\cong {\s}\left(\bigoplus_{n\geq1}q^nK_*^G(M,m_0)\otimes\IC\right)\\&\cong {\s}\left(\bigoplus_{n\geq1}q^nK_G^*(M_+,+)\otimes\IC\right)\\&\cong 
	{\s}\left(\bigoplus_{n\geq1}q^nK_G^*(M)\otimes\IC\right).\end{align*}
	Combining Corollary \ref{iso1}, Theorem \ref{mm}  and Theorem \ref{symmetric} we obtain
	$$H_*(\c(M,m_0,G)^G;\IC)\cong\mathfrak{F}_G^q(M).$$\end{proof}

For the case when $M$ is not necessarily $G$-connected, we can obtain also a similar result. For details consult \cite[Proposition 6.11]{ve2015}.

\begin{proposition}\label{iso2}Let $X$ be a finite $G$-CW-complex, we have an isomorphism
	$$H_*(\Omega\c(\Sigma X,G)^G;\IC)\cong \s(k_*^G(X,x_0)\otimes\IC).$$
\end{proposition}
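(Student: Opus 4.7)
The plan is to reduce the statement to the $G$-connected case (which has already been handled by Corollary \ref{iso1}) using the suspension trick, and then apply the Milnor--Moore-type Theorem \ref{mm} to the loop space $\Omega\c(\Sigma X, G)^G$. Because $\Sigma X$ is a suspension, it is $G$-connected, so all the tools from the $G$-connected situation apply to $\c(\Sigma X, x_0, G)$.

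First I would observe that the $G$-action on $\Omega Y$ is induced pointwise from that on $Y$, hence a loop is $G$-fixed iff it lands in $Y^G$. Therefore taking $G$-fixed points commutes with looping:
$$\Omega\c(\Sigma X, G)^G \;=\; (\Omega\c(\Sigma X, G))^G,$$
where $\Omega$ denotes the based loop space at the constant loop component. In particular this space is path-connected, and it is a homotopy-associative Hopf space (either via the loop multiplication, or via the putting-together-configurations product inherited from $\c(\Sigma X, G)^G$; the two agree up to homotopy on the loop component of the unit by the Eckmann--Hilton style argument).

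Next I would compute the homotopy groups. Combining Theorem \ref{conf} applied to the $G$-connected space $\Sigma X$ with the suspension isomorphism in equivariant connective K-homology, I get
\begin{align*}
\pi_n(\Omega\c(\Sigma X, x_0, G)^G)
&\cong \pi_{n+1}(\c(\Sigma X, x_0, G)^G)\\
&\cong k_{n+1}^G(\Sigma X, x_0)\\
&\cong k_n^G(X, x_0),
\end{align*}
naturally in $X$ and compatibly with the Hopf-algebra structures.

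Finally I would invoke Theorem \ref{mm}: for the path-connected homotopy-associative Hopf space $\mathfrak{Y} = \Omega\c(\Sigma X, G)^G$, the Hurewicz morphism induces an isomorphism of Hopf algebras
$$\bar{\lambda}\colon \s\bigl(\pi_*(\mathfrak{Y}) \otimes \IC\bigr) \;\xrightarrow{\cong}\; H_*(\mathfrak{Y}; \IC).$$
Plugging in the previous computation gives the claimed isomorphism
$$H_*(\Omega\c(\Sigma X, G)^G; \IC) \;\cong\; \s\bigl(k_*^G(X, x_0)\otimes \IC\bigr).$$

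The main obstacle I anticipate is a bookkeeping matter rather than a deep one: verifying that the Hopf-space structure on $\Omega\c(\Sigma X, G)^G$ used to invoke Theorem \ref{mm} is the one whose induced multiplication on $\pi_*$ matches the graded-Lie structure coming from equivariant connective K-homology (so that the symmetric-algebra statement is really the one we want). This essentially amounts to recording that the suspension isomorphism $k_{n+1}^G(\Sigma X, x_0)\cong k_n^G(X, x_0)$ is compatible with the Pontryagin products on both sides, and that the putting-together product on $\c(\Sigma X, G)$ loops back to the standard loop product on $\Omega\c(\Sigma X, G)$, both of which are standard once the identifications are set up carefully.
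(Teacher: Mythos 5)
Your route is the intended one: the paper gives no argument for this proposition and simply defers to \cite[Prop.~6.11]{ve2015}, but the Remark defining $\c(X,x_0,G)=\Omega_0\c(\Sigma X,x_0,G)$ for non-$G$-connected $X$, together with Corollary \ref{iso1}, makes clear that the proof is exactly ``loop the configuration space of the $G$-connected suspension, identify homotopy groups via Theorem \ref{conf} and the suspension isomorphism in $k_*^G$, and apply Theorem \ref{mm}.'' Your preliminary observations --- that $\Sigma X$ is $G$-connected, that taking $G$-fixed points commutes with looping, and that the loop product agrees with the configuration product by an Eckmann--Hilton argument --- are correct and are the right things to record.

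The one genuine problem is your treatment of $\pi_0$. You assert the space is path-connected ``at the constant loop component,'' but your own computation gives $\pi_0(\Omega\c(\Sigma X,x_0,G)^G)\cong\pi_1(\c(\Sigma X,x_0,G)^G)\cong k_1^G(\Sigma X,x_0)\cong k_0^G(X,x_0)$, which is nonzero in general (for $X=S^0$ it is $R(G)$; the paper's $BU_G$ example records the resulting factor coming from the components). These two claims are incompatible: either you take the full loop space, which is then not path-connected and so not covered by Theorem \ref{mm} as stated, or you take the identity component $\Omega_0$, which is path-connected but has $\pi_0=0$, so the degree-zero summand $k_0^G(X,x_0)$ of the target $\s(k_*^G(X,x_0)\otimes\IC)$ never appears among its homotopy groups. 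The missing step is to apply Theorem \ref{mm} only to the identity component and then invoke the H-space splitting $H_*(\Omega Y;\IC)\cong\IC[\pi_0(\Omega Y)]\otimes H_*(\Omega_0 Y;\IC)$; this produces the group algebra $\IC[k_0^G(X,x_0)]$ rather than the symmetric algebra on $k_0^G(X,x_0)\otimes\IC$, and you must say in what sense the two are being identified (they agree as graded vector spaces, not as Hopf algebras). As written, the proposal papers over exactly the point that distinguishes the non-$G$-connected case from Corollary \ref{iso1}.
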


 In particular we have.
\begin{example}
	For $X=S^0$ we have $$\Omega\left(\c(\Sigma(S^0),G)\right)\simeq BU_G.$$Where $BU_G$ can be taken as the Grassmannian of finite dimensional vector subspaces of a complete $G$-universe. A \emph{complete $G$-universe} is a countably infinite-dimensional representation of $G$ with an inner product such that contains a copy of every irreducible representation of $G$, contains countably many copies of each finite-dimensional subrepresentation.  Applying the above discussion to this Hopf space we conclude that
	\begin{align*}
	H_*\left((BU_G)^G;\IC\right)\cong &R(G)\otimes\s\left(\pi_*((BU_G)^G)\otimes\IC\right)\\
	\cong& R(G)\otimes\s\left(\bigoplus_{n\geq0}R(G_n)\otimes\IC\right)\\
	\cong& \s\left(\bigoplus_{n\geq0}R(G_n)\otimes\IC\right).
	\end{align*}
	Summarizing, we have an isomorphism 
	
	$$H_*((BU_G)^G;\IC)\cong\calf_G^q(\pt)=\s\left(\bigoplus_{n\geq0}R(G_n)\otimes\IC\right).$$
\end{example}
We also have 
$$H_*\left((BU_G)^G;\IC\right)\cong\s\left(\bigoplus_{n\geq0}R(G_n)\otimes\IC\right)\cong\IC[\sigma_1^1,\ldots,\sigma_1^{k_1},\sigma_2^1,\ldots]$$
where $\{\sigma_i^1,\cdots,\sigma_i^{k_i}\}$ is a complete set of non isomorphic irreducible representations of $G_i$. We expect
that the elements $\sigma_i^k$ correspond in some sense with duals of $G$-equivariant Chern classes.

Now suppose that $M$ is a $G$-connected $G$-$\spinc$-manifold and $N$ is a $H$-connected $H$-$\spinc$-manifold, then we have an isomorphism of $\IZ$-graded Hopf algebras
$$H_*\left(\c(M\times N,(m_0,n_0),G\times H)^{G\times H};\IC\right)\cong \calf_{G}^q(M)\otimes_{\calf(\pt)} \calf_H^q(N)$$
In the case that $M=N=S^0$ with trivial action we obtain
$$H_*((BU_{G\times H})^{G\times H};\IC)\cong \calf_{G}(\pt)\otimes_{\calf(\pt)}\calf_H(\pt).$$
%----------------------------------------------SECTION-----------------------------------------%
%\section{Finalmente Fock de un prodcuto}
\bibliographystyle{alpha}
\bibliography{fock}
\end{document}